\newcommand{\Su}{\mathcal{S}}
\newcommand{\Var}{{\bf Var}}
\newtheorem{thm}{Theorem}
\newtheorem{cor}[thm]{Corollary}
\newtheorem{defi}[thm]{Definition}
\newtheorem{claim}[thm]{Claim}
\newtheorem{nota}[thm]{Notation}
\newtheorem*{tempo*}{Template}
\newtheorem{theorem}[thm]{Theorem}
\newtheorem{lemma}[thm]{Lemma}
\newtheorem{definition}[thm]{Definition}
\newcommand\be{\begin{equation}}
\newcommand\ee{\end{equation}} 
\def\bdefi{\begin{defi}\rm}
\def\edefi{\end{defi}}
\def\bnota{\begin{nota}\rm}
\def\enota{\end{nota}}
\def\SIX{\Pi_{2}^{1}\text{-\textsf{\textup{CA}}}_{0}}
\def\ZFC{\textup{\textsf{ZFC}}}
\def\({\textup{(}}
\def\){\textup{)}}
\def\bye{\end{document}}
\def\N{{\mathbb  N}}
\def\Q{{\mathbb  Q}}
\def\R{{\mathbb  R}}
\def\SS{\textup{\textsf{S}}}
\def\LL{{\mathfrak{L}}}
\def\J{\mathcal{J}}
\def\D{{\mathcal  D}}
\def\di{\rightarrow}
\def\asa{\leftrightarrow}
\def\SUP{\textup{\textsf{sup}}}
\def\NIN{\textup{\textsf{NIN}}}
\def\NBI{\textup{\textsf{NBI}}}
\def\w{\textup{\textsf{w}}}
\def\BW{\textup{\textsf{BW}}}
\def\eps{\varepsilon}
\def\cc{\mathcal{C}}
\numberwithin{equation}{section}
\numberwithin{thm}{section}
\begin{document}
\title{Betwixt Turing and Kleene}
\author{Dag Normann}
\address{Department of Mathematics, The University 
of Oslo, P.O. Box 1053, Blindern N-0316 Oslo, Norway}
\email{dnormann@math.uio.no}
\author{Sam Sanders}
\address{Department of Philosophy II, RUB Bochum, Germany}
\email{sasander@me.com}
\keywords{Representations, computability theory, Kleene S1-S9, bounded variation}
\begin{abstract}
Turing's famous `machine' model constitutes the first intuitively convincing framework for \emph{computing with real numbers}.  
Kleene's computation schemes S1-S9 extend Turing's approach and provide a framework for \emph{computing with objects of any finite type}.  
Various research programs have been proposed in which higher-order objects, like functions on the real numbers, are \emph{represented/coded} as real numbers, so as to make 
them amenable to the Turing framework.  It is then a natural question whether there is any significant difference between the Kleene approach or the Turing-approach-via-codes.  
Continuous functions being well-studied in this context, we study \emph{functions of bounded variation}, which have \textbf{at most countably} many points of discontinuity. 
A central result is the \emph{Jordan decomposition theorem} that a function of bounded variation on $[0,1]$ equals the difference of two monotone functions.  
We show that for this theorem and related results, the difference between the Kleene approach and the Turing-approach-via-codes is \emph{huge}, in that full second-order arithmetic readily comes to the fore in Kleene's approach, in the guise of Kleene's quantifier $\exists^{3}$. 
\end{abstract}


\maketitle
\thispagestyle{empty}

\section{Introduction: Jordan, Turing, and Kleene}\label{intro}
In a nutshell, we study the computational properties of the \emph{Jordan decomposition theorem} as in Theorem \ref{drd} and other results on \emph{functions of bounded variation}, establishing the huge differences between the \emph{Turing and Kleene approaches} to computability theory. 
For the rest of this section, we introduce the above italicised notions and sketch the contents of this paper in more detail.  All technical notions are introduced in Section \ref{prelim} while our main results are in Section \ref{main}.  

\medskip

First of all, Turing's famous `machine' model, introduced in \cite{tur37}, constitutes the first intuitively convincing framework for \emph{computing with real numbers}.  
Kleene's computation schemes S1-S9, introduced in \cite{kleeneS1S9} extend Turing's framework and provide a framework for \emph{computing with objects of any finite type}.  
Now, various\footnote{Examples of such frameworks include: reverse mathematics (\cites{simpson2, stillebron}), constructive analysis (\cite{beeson1}*{I.13}, \cite{bish1}), predicative analysis (\cite{littlefef}), and computable analysis (\cite{wierook}). Note that Bishop's constructive analysis is not based on Turing computability \emph{directly}, but one of its `intended models' is however (constructive) recursive mathematics, as discussed in \cite{brich}.  One aim of Feferman's predicative analysis is to capture constructive reasoning in the sense of Bishop.} research programs have been proposed in which higher-order objects are \emph{represented/coded} as real numbers or similar second-order representations, so as to make 
them amenable to the Turing framework.  It is then a natural question whether there is any significant difference\footnote{The \emph{fan functional} constitutes an early \emph{natural} example of this difference: it has a computable code but is not S1-S9 computable (but S1-S9 computable in Kleene's $\exists^{2}$ from Section \ref{prelim1}).  The fan functional computes a modulus of uniform continuity for continuous functions on Cantor space; details may be found in \cite{longmann}.\label{seeyouwell}} between the Kleene approach or the Turing-approach-via-codes.     
Continuous functions being well-studied$^{\ref{seeyouwell}}$ in this context, we investigate \emph{functions of bounded variation}, which have \textbf{at most} countably many points of discontinuity.

\medskip

Secondly, the notion of \emph{bounded variation} was first introduced by Jordan around 1881 (\cite{jordel}) yielding a generalisation of Dirichlet's convergence theorems for Fourier series.  
Indeed, Dirichlet's convergence results are restricted to functions that are continuous except at a finite number of points, while functions of bounded variation can have (at most) countable many points of discontinuity, as also shown by Jordan, namely in \cite{jordel}*{p.\ 230}.
The fundamental theorem about functions of bounded variation is as follows and can be found in \cite{jordel}*{p.\ 229}.
\begin{thm}[Jordan decomposition theorem]\label{drd}
A function $f : [0, 1] \di \R$ of bounded variation can be written as the difference of two non-decreasing functions $g, h:[0,1]\di \R$.
\end{thm}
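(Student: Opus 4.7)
The plan is to use the standard \emph{total variation function}. Define $g:[0,1]\di \R$ by setting $g(x)$ to be the supremum of $\sum_{i=0}^{n-1}|f(x_{i+1})-f(x_{i})|$ taken over all partitions $0=x_{0}<x_{1}<\cdots <x_{n}=x$ of $[0,x]$; this supremum exists and is finite precisely because $f$ has bounded variation on $[0,1]$. Then set $h(x):=g(x)-f(x)$, so that trivially $f=g-h$ on $[0,1]$. Everything reduces to verifying that both $g$ and $h$ are non-decreasing.

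The monotonicity of $g$ is immediate from the definition: for $x\le y$, any partition of $[0,x]$ extends to a partition of $[0,y]$ by appending $y$ as a final point, and this operation can only increase the sum, so $g(x)\le g(y)$. For $h$, I would argue as follows: fix $x\le y$ and a partition $0=x_{0}<\cdots<x_{n}=x$ of $[0,x]$. Appending $y$ gives a partition of $[0,y]$ whose associated sum equals $\sum_{i=0}^{n-1}|f(x_{i+1})-f(x_{i})|+|f(y)-f(x)|$. Taking the supremum over partitions of $[0,x]$ yields
\[
g(y)\;\ge\; g(x)+|f(y)-f(x)|\;\ge\; g(x)+\bigl(f(y)-f(x)\bigr),
\]
which rearranges to $h(y)=g(y)-f(y)\ge g(x)-f(x)=h(x)$, as required.

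The classical argument is routine; the only genuine content is the inequality $g(y)\ge g(x)+(f(y)-f(x))$, and the rest is bookkeeping. The main obstacle I anticipate is not mathematical but conceptual, and it is what makes the theorem interesting for this paper: defining $g(x)$ pointwise already presupposes access to the supremum of a set of sums indexed by finite tuples of real numbers, which is an operation of essentially third-order character. Thus while the witnessing pair $(g,h)$ exists by a one-paragraph classical proof, \emph{extracting} it effectively from $f$ is exactly where the gap between the Turing-via-codes approach and the Kleene S1-S9 approach will appear in the sequel.
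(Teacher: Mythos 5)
Your proof is correct: it is the standard classical argument, with $g(x)$ the running total variation $V_{0}^{x}(f)$ and $h = g - f$, and the key inequality $g(y) \ge g(x) + |f(y)-f(x)|$ for $x \le y$ is established correctly. Note, however, that the paper does not prove Theorem~\ref{drd} at all -- it is stated as a classical result and attributed directly to Jordan (1881), so there is no proof in the paper to compare against. The closest thing to a construction is Claim~\ref{claim1} inside the proof of Theorem~\ref{thm.surprise}, which builds a Jordan realiser from the \emph{positive} variation $\Delta(f) = \sup_{P}\sum_{i}\bigl(f(x_{i+1})-f(x_{i})\bigr)^{+}$, taking $f^{+}(x) = \Delta(f_{x})$ and $f^{-} = f^{+}-f$; that is the decomposition $f = P - N$ into positive and negative variation, whereas yours (after normalising $f(0)=0$) is the equivalent pair $(V, V-f) = (P+N,\,2N)$, since $V = P+N$ and $f = P-N$. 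Both are standard and equally serviceable for the classical statement; the paper prefers $\Delta$ because it is what a $\textsf{sup}$-realiser most naturally produces. Your closing remark -- that evaluating $g(x)$ pointwise already demands a supremum over all partitions, an intrinsically third-order operation on $f$ -- is exactly the conceptual hinge of the paper: it is why Definition~\ref{JDR} introduces Jordan and $\textsf{sup}$-realisers as type-$3$ functionals, and why the distinction between Turing-via-codes and Kleene S1--S9 becomes visible here.
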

The computational properties of Theorem \ref{drd} have been studied extensively via second-order representations, namely in e.g.\ \cites{groeneberg, kreupel, nieyo, verzengend}.
The same holds for constructive analysis by \cites{briva, varijo,brima, baathetniet}, involving different (but related) constructive enrichments.  
Now, finite iterations of the Turing jump suffice to compute $g, h$ from Theorem \ref{drd} in terms of \emph{represented} functions $f$ of bounded variation by \cite{kreupel}*{Cor.\ 10}.

\medskip

Thirdly, in light of the previous, it is a natural question what the computational properties of Theorem \ref{drd} are in Kleene's framework.  
In particular, the following question is central to this paper.
\begin{center}
\emph{How hard is it to compute \(S1-S9\) from $f:[0,1]\di \R$ of bounded variation, two monotone functions $g, h$ such that $f=g-h$ on $[0,1]$?}
\end{center}
A functional that can perform this computational task will be called a \emph{Jordan realiser}, introduced in Definition \ref{JDR}.  A related and \emph{natural} computational task is as follows.
\begin{center}
\emph{How hard is it to compute \(S1-S9\) from $f:[0,1]\di \R$ of bounded variation, the supremum $\SUP_{x\in [0,1]}f(x)$?}
\end{center}
A functional that can perform this computational task will be called a \emph{\textsf{\textup{sup}}-realiser} (see Definition \ref{JDR}).
This task restricted to \emph{continuous} functions is well-studied, and rather weak by \cite{kohlenbach2}*{Footnote 6}.  In light of the above, the following computational task is also natural:
\begin{center}
\emph{How hard is it to compute \(S1-S9\) from $f:[0,1]\di \R$ of bounded variation, a sequence $(x_{n})_{n\in \N}$ listing the points of discontinuity of $f$?}
\end{center} 
By way of a robustness result, we show that the above three tasks are \emph{the same} modulo Kleene's $\exists^{2}$ from Section \ref{prelim1}.
Moreover, we show that Jordan realisers are \emph{hard} to compute: no type two functional, in particular 
 the functionals $\SS_{k}^{2}$ which decide $\Pi_{k}^{1}$-formulas (see Section~\ref{prelim1}), can compute a Jordan realiser.
We also show that Jordan realisers are \emph{powerful}: when combined with other natural functionals, one can go all the way up to Kleene's quantifier $\exists^{3}$ which yields full second-order arithmetic (see again Section \ref{prelim1} for the definition of $\exists^{3}$).  We also study special cases of Jordan realisers, which connects to the computational tasks associated to the \emph{uncountability of $\R$}.

\medskip

Finally, our main results are obtained in Section \ref{main} while some preliminary notions, including some essential parts of Kleene's higher-order computability theory, can be found in Section \ref{prelim}.

\section{Preliminaries}\label{prelim}
\subsection{Kleene's higher-order computability theory}\label{kle9}
We first make our notion of `computability' precise as follows.  
\begin{enumerate}
\item[(I)] We adopt $\ZFC$, i.e.\ Zermelo-Fraenkel set theory with the Axiom of Choice, as the official metatheory for all results, unless explicitly stated otherwise.
\item[(II)] We adopt Kleene's notion of \emph{higher-order computation} as given by his nine clauses S1-S9 (see \cite{longmann}*{Ch.\ 5} or \cite{kleeneS1S9}) as our official notion of `computable'.
\end{enumerate}
We refer to \cite{longmann} for a thorough overview of higher-order computability theory.
We do mention the distinction between `normal' and `non-normal' functionals  based on the following definition from \cite{longmann}*{\S5.4}. 
\bdefi\label{norma}
For $n\geq 2$, a functional of type $n$ is called \emph{normal} if it computes Kleene's $\exists^{n}$ following S1-S9, and \emph{non-normal} otherwise.  
\edefi
\noindent
We only make use of $\exists^{n}$ for $n=2,3$, as defined in Section \ref{prelim1}.

\medskip

It is a historical fact that higher-order computability theory, based on Kleene's S1-S9, has focused primarily on the world of \emph{normal} functionals (see \cite{longmann}*{\S5.4} for this opinion).  
We have previous studied the computational properties of new \emph{non-normal} functionals, namely those that compute the objects claimed to exist by:
\begin{itemize}
\item the Heine-Borel and Vitali covering theorems (\cites{dagsam, dagsamII, dagsamVI}),
\item the Baire category theorem (\cite{dagsamVII}),
\item local-global principles like \emph{Pincherle's theorem} (\cite{dagsamV}),
\item the uncountability of $\R$ and the Bolzano-Weierstrass theorem for countable sets in Cantor space (\cites{dagsamX, dagsamXI}),
\item weak fragments of the Axiom of (countable) Choice (\cite{dagsamIX}).
\end{itemize}
In this paper, we continue this study for the Jordan decomposition theorem and other basic properties of functions of bounded variation.
Next, we introduce some required higher-order notions in Section~\ref{klonkio}.

\subsection{Some higher-order notions}\label{klonkio}
\subsubsection{Some higher-order functionals}\label{prelim1}
We introduce a number of comprehension functionals from the literature.  
We are dealing with \emph{conventional} comprehension, i.e.\ only parameters over $\N$ and $\N^{\N}$ are allowed in formula classes like $\Pi_{k}^{1}$ et cetera.  

\medskip

First of all, the functional $\varphi$ as in $(\exists^{2})$ is clearly discontinuous at $f=11\dots$; in fact, $(\exists^{2})$ is equivalent to the existence of $F:\R\di\R$ such that $F(x)=1$ if $x>0$, and $0$ otherwise (see \cite{kohlenbach2}*{\S3}).  
\be\label{muk}\tag{$\exists^{2}$}
(\exists \varphi^{2}\leq_{2}1)(\forall f^{1})\big[(\exists n^{0})(f(n)=0) \asa \varphi(f)=0    \big]. 
\ee
Intuitively speaking, the functional $\varphi$ from $(\exists^{2})$ can decide the truth of any $\Sigma_{1}^{0}$-formula in its (Kleene) normal form.  
Related to $(\exists^{2})$, the functional $\mu^{2}$ in $(\mu^{2})$ is also called \emph{Feferman's $\mu$} (\cite{avi2}), defined as follows:
\begin{align}\label{mu}
(\exists \mu^{2})(\forall f^{1})\big[ (\exists n)(f(n)=0) \di &[f(\mu(f))=0\wedge (\forall i<\mu(f))(f(i)\ne 0) ]\notag\\
& \wedge [ (\forall n)(f(n)\ne0)\di   \mu(f)=0]    \big]. \tag{$\mu^{2}$}
\end{align}
We have $(\exists^{2})\asa (\mu^{2})$ over a weak system by \cite{kooltje}*{Prop.\ 3.4 and Cor.~3.5}) while $\mu^{2}$ is readily computed from $\varphi^{2}$ in $(\exists^{2})$.
The third-order functional from $(\exists^{2})$ is also called `Kleene's quantifier $\exists^{2}$', and we use the same convention for other functionals.  


\medskip

Secondly, $\SS^{2}$ as in $(\SS^{2})$ is called \emph{the Suslin functional} (\cite{kohlenbach2, avi2}):
\be\tag{$\SS^{2}$}
(\exists\SS^{2}\leq_{2}1)(\forall f^{1})\big[  (\exists g^{1})(\forall n^{0})(f(\overline{g}n)=0)\asa \SS(f)=0  \big].
\ee
Intuitively, the Suslin functional $\SS^{2}$ can decide the truth of any $\Sigma_{1}^{1}$-formula in its normal form.  
We similarly define the functional $\SS_{k}^{2}$ which decides the truth or falsity of $\Sigma_{k}^{1}$-formulas (again in normal form).  
We note that the operators $\nu_{n}$ from \cite{boekskeopendoen}*{p.\ 129} are essentially $\SS_{n}^{2}$ strengthened to return a witness to the $\Sigma_{n}^{1}$-formula at hand.  
As suggested by its name, $\nu_{k}$ is the restriction of Hilbert-Bernays' $\nu$ from \cite{hillebilly2}*{p.\ 495} to $\Sigma_{k}^{1}$-formulas. 
We sometimes use $\SS_{0}^{2}$ and $\SS_{1}^{2}$ to denote $\exists^{2}$ and $\SS^{2}$.  

\medskip

\noindent
Thirdly, second-order arithmetic is readily derived from the following:
\be\tag{$\exists^{3}$}
(\exists E^{3}\leq_{3}1)(\forall Y^{2})\big[  (\exists f^{1})(Y(f)=0)\asa E(Y)=0  \big].
\ee
The functional from $(\exists^{3})$ is also called `Kleene's quantifier $\exists^{3}$'.  
Hilbert-Bernays' $\nu$ from \cite{hillebilly2}*{p.\ 495} trivially computes $\exists^{3}$.

\medskip

Finally, the functionals  $\SS_{k}^{2}$ are defined using the usual formula class $\Pi_{k}^{1}$, i.e.\ only allowing first- and second-order parameters.  
We have dubbed this the \emph{conventional approach} and the associated functionals are captured by the umbrella term \emph{conventional comprehension}. 
Comprehension involving third-order parameters has previously (only) been studied in \cites{littlefef, kohlenbach4}, to the best of our knowledge.

\subsubsection{Some higher-order definitions}\label{defki}
We introduce some required definitions, which are all standard.

\medskip

First of all, a fruitful and faithful approach is the representation of sets by characteristic functions (see e.g.\ \cites{samnetspilot, samcie19,samwollic19,dagsamVI,dagsamVII, kruisje, dagsamIX}), well-known from e.g.\ measure and probability theory.   We shall use this approach, always assuming $\exists^{2}$ to make sure open sets represented by countable unions of basic opens  are indeed sets in our sense.  

\medskip

Secondly, we now turn to \emph{countable} sets.
Of course, the notion of `countable set' can be formalised in various ways, as follows.  
\bdefi[Enumerable set]\label{eni}
A set $A\subset \R$ is \emph{enumerable} if there is a sequence $(x_{n})_{n\in \N}$ such that $(\forall x\in \R)(x\in A\asa (\exists n\in \N)(x=x_{n}))$.  
\edefi
Definition \ref{eni} reflects the notion of `countable set' from reverse mathematics (\cite{simpson2}*{V.4.2}).  
%
Our definition of `countable set' is as follows.  
\bdefi[Countable set]\label{standard}~
A set $A\subset \R$ is \emph{countable} if there is $Y:\R\di \N$ such that 
\be\label{polki}
(\forall x, y\in A)(Y(x)=_{0}Y(y)\di x=y).
\ee 
The functional $Y$ as in \eqref{polki} is called \emph{injective} on $A$ or \emph{an injection} on $A$.
If $Y:\R\di \N$ is also \emph{surjective}, i.e.\ $(\forall n\in \N)(\exists x\in A)(Y(x)=n)$, we call $A$ \emph{strongly countable}.
The functional $Y$ is then called \emph{bijective} on $A$ or \emph{a bijection} on $A$.
\edefi
The first part of Definition \ref{standard} is from Kunen's set theory textbook (\cite{kunen}*{p.~63}) and the second part is taken from Hrbacek-Jech's set theory textbook \cite{hrbacekjech} (where the term `countable' is used instead of `strongly countable').  According to Veldman (\cite{veldje2}*{p.\ 292}), Brouwer studies set theory based on injections in \cite{brouwke}.
Hereonafter, `strongly countable' and `countable' shall exclusively refer to Definition~\ref{standard}.  

\section{Main results}\label{main}
In this section, we shall obtain our main results as follows.  Recall that a \emph{Jordan realiser} outputs the monotone functions claimed to exist by the Jordan decomposition theorem as in Theorem \ref{drd}. 
\begin{itemize}
\item We introduce Jordan realisers and other functionals witnessing basic properties of functions of bounded variation; we show that three of these are computationally equivalent (Section \ref{seccer}).  
\item  We show that Jordan realisers are \emph{hard} to compute based on results from \cite{dagsamX} (Section \ref{seccer2}).
\item  We show that Kleene's $\exists^{3}$ can be computed  from $\exists^{2}$, a Jordan realiser, and a well-ordering of $[0,1]$ (Section~\ref{seccer3}).
\item We show that Jordan realisers remain hard to compute even if we severely restrict the output (Section \ref{seccer4}).  

\end{itemize}

\subsection{Jordan realisers and equivalent formulations}\label{seccer}
We introduce functionals witnessing basic properties of functions of bounded variation, including the Jordan decomposition theorem (Theorem \ref{drd}).
We show that three of these are computationally equivalent given $\exists^2$.  

\medskip

As noted above, we always assume $\exists^2$ but specify the use when essential. 
This means that we can use the concept of Kleene-computability over $\R$ or $[0,1]$ without focusing on how these spaces are represented.

\medskip

First of all, as to definitions, the \emph{total variation} of a function $f:[a, b]\di \R$ is (nowadays) defined as follows:
\be\label{tomb}\textstyle
V_{a}^{b}(f):=\sup_{a\leq x_{0}< \dots< x_{n}\leq b}\sum_{i=0}^{n} |f(x_{i})-f(x_{i+1})|.
\ee
If this quantity exists and is finite, one says that $f$ has bounded variation on $[a,b]$.
Now, the notion of bounded variation is defined in \cite{nieyo} \emph{without} mentioning the supremum in \eqref{tomb}; this approach can also be found in \cite{kreupel, briva, brima}.  Hence, we shall hereafter distinguish between the following two notions.  As it happens, Jordan seems to use item \eqref{donp} of Definition \ref{varvar} in \cite{jordel}*{p.\ 228-229}, providing further motivation for the functionals introduced in Definition \ref{JDR}.
\bdefi[Variations on variation]\label{varvar}
\begin{enumerate}  
\renewcommand{\theenumi}{\alph{enumi}}
\item The function $f:[a,b]\di \R$ \emph{has bounded variation} on $[a,b]$ if there is $k_{0}\in \N$ such that $k_{0}\geq \sum_{i=0}^{n} |f(x_{i})-f(x_{i+1})|$ 
for any partition $x_{0}=a <x_{1}< \dots< x_{n-1}<x_{n}=b  $.\label{donp}
\item The function $f:[a,b]\di \R$ \emph{has {a} variation} on $[a,b]$ if the supremum in \eqref{tomb} exists and is finite.\label{donp2}
\end{enumerate}
\edefi
We can now introduce the following notion of `realiser' for the Jordan decomposition theorem and related functionals.
\begin{definition}\label{JDR}\rm~
\begin{itemize}
\item A \emph{Jordan realiser} is a partial functional $\J$ of type 3 taking as input a function $f:[0,1] \rightarrow \R$ which has bounded variation (item \eqref{donp} in Definition \ref{varvar}), and providing a pair $(g,h)$ of increasing functions $g$ and $h$ such that $f = g-h$ on $[0,1]$.
\item A \emph{weak Jordan realiser} is a partial functional $\J_{\w}$ of type 3 taking as inputs a function $f:[0,1] \rightarrow \R$ and its bounded variation $V_{0}^{1}(f)$ (item \eqref{donp2} in Definition \ref{varvar}), and providing a pair $(g,h)$ of increasing functions $g$ and $h$ such that $f = g-h$ on $[0,1]$.
\item A \emph{$\SUP$-realiser} is a partial functional $\mathcal{S}$ of type 3 taking as input a function $f:[0,1] \rightarrow \R$ which has bounded variation (item \eqref{donp} in Definition \ref{varvar}), and providing the supremum $\SUP_{x\in [0,1]}f(x)$. 
\item A \emph{continuity-realiser} is a partial functional $\mathcal{L}$ of type 3 taking as input a function $f:[0,1] \rightarrow \R$ which has bounded variation (item \eqref{donp} in Definition \ref{varvar}), and providing a sequence $(x_{n})_{n\in \N}$ which lists all points of discontinuity of $f$ on $[0,1]$. 
\end{itemize}
\end{definition}
Next, we need the following lemma.  The use of $\exists^{2}$ is perhaps superfluous in light of the constructive proof in \cite{tognog}, but the latter seems to make essential use of the Axiom of (countable) Choice.   
\begin{lemma}\label{korf} 
There is a functional $\D$, computable in $\exists^2$, such that if $f:[0,1] \rightarrow \R$ is increasing \(decreasing\), then $\D(f)$ enumerates all points of discontinuity of $f$ on $[0,1]$. 
\end{lemma}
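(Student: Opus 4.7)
The plan is to assume WLOG that $f$ is non-decreasing (the decreasing case being symmetric). For $x \in (0,1)$, set
\[
f(x^-) := \sup_{p \in \Q \cap [0,x)} f(p), \qquad f(x^+) := \inf_{q \in \Q \cap (x,1]} f(q),
\]
with the obvious one-sided modifications at the endpoints; monotonicity yields $f(x^-) \leq f(x) \leq f(x^+)$, so $x$ is a point of discontinuity of $f$ iff the jump $j(x) := f(x^+) - f(x^-)$ is strictly positive. I would then decompose the discontinuity set as $D_f = \bigcup_{n \in \N} D_n$, where $D_n := \{x \in [0,1] : j(x) > 1/n\}$, and observe via telescoping that any $k$ distinct points of $D_n$ can be separated by pairwise disjoint rational intervals whose $f$-increments sum to more than $k/n$; hence $|D_n| \leq n(f(1)-f(0))$ is finite and $D_f$ is countable.

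Next, using $\exists^2$, I would uniformly compute $f(x^-)$ and $f(x^+)$ for any specific $x$: each is a monotone limit of values taken at rationals, so $\exists^2$ yields a Cauchy representative for it. Consequently $j(x)$ is computable from $f$ and $x$ via $\exists^2$, and the predicate ``$x \in D_n$'' is decidable for any given $x$.

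The enumeration then proceeds by a dyadic search inside each $D_n$ in turn. Call the dyadic interval $I_{k,m} := [k/2^m, (k+1)/2^m]$ \emph{thick} at level $n$ if $f((k+1)/2^m) - f(k/2^m) > 1/n$. Since the adjacent $f$-increments at depth $m$ telescope to $f(1)-f(0)$, at most $n(f(1)-f(0))$ intervals are thick at each depth. Every $x \in D_n$ lies in a descending chain of thick intervals shrinking to $x$. Dyadic rationals can be inspected directly via the decidable discontinuity predicate, while for non-dyadic points of $D_n$ the descending chain of thick sub-intervals produces a Cauchy sequence converging to a candidate $x^*$, which is then verified against ``$j(x^*) > 1/n$'' using $\exists^2$. $\D(f)$ is obtained by dovetailing the outputs over all $n$, retaining only those candidates verified to have positive jump.

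The hard part will be pairing thick intervals with genuine discontinuities: a thick interval might be thick not because of one jump of size $>1/n$ but because several smaller jumps aggregate inside it. This is resolved by descending further in the dyadic tree, since small jumps eventually separate into distinct sub-intervals (rendering those sub-intervals thin) whereas a single jump of size $>1/n$ persists in a shrinking thick sub-chain. Combined with the direct $\exists^2$-decidability of $j(x^*) > 1/n$ at the limit candidate, this allows each element of $D_n$ to be correctly identified and nothing extraneous to be listed.
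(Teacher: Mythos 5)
Your approach differs genuinely from the paper's. The paper works on the \emph{range} side: enumerating $\Q\cap[0,1]$ as $(q_i)_{i\in\N}$ with $a_i:=f(q_i)$, it considers the set $A$ of rational pairs $p<r$ such that no $a_i$ lies in $(p,r)$, and for $(p,r)\in A$ sets $x_{p,r}:=\sup\{q_i: a_i\leq p\}=\inf\{q_j: r\leq a_j\}$. Every discontinuity of a monotone $f$ produces such a ``range gap'', so the (countably many) reals $x_{p,r}$ already list all discontinuities, and $A$ together with $(p,r)\mapsto x_{p,r}$ are plainly $\exists^2$-computable --- the only work is a single $\sup$ over rationals. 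Your proof instead works on the \emph{domain} side, via a dyadic tree of intervals whose $f$-increment exceeds $1/n$. Both strategies are sound in spirit, but the range-gap trick sidesteps all the tree combinatorics you are taking on.

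It is in that combinatorics that your proposal has a real gap. You correctly note that a thick interval may be thick only because several small jumps aggregate inside it, and you observe that such aggregates eventually separate into distinct sub-intervals. But the sentence ``the descending chain of thick sub-intervals produces a Cauchy sequence converging to a candidate $x^*$'' does not yet describe an algorithm. The tree $T_n$ of thick intervals at level $n$ can have dead ends (a thick interval need not have a thick child --- the increment may split roughly evenly between the two halves), it can carry several infinite paths (up to the width bound you noted), and you must enumerate the limits of \emph{all} of them, uniformly in $f$. Deciding whether a given node extends to an infinite thick path is a $\Pi^0_1(f)$ question --- decidable with $\exists^2$, but you never invoke this --- and one then still has to argue that the pruned tree of nodes-with-infinite-paths has bounded, eventually stable branching so that finitely many Cauchy sequences can be uniformly read off. ``Dovetailing the outputs over all $n$'' presupposes that for each $n$ you already have an enumerable family of candidates, which is precisely what needs to be produced. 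The plan is salvageable with this extra work, but as written the central computational step is missing, whereas the paper's proof avoids it entirely.
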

\begin{proof}
Let $\{q_i\}_{i \in \N}$ be an enumeration of $\Q \cap [0,1]$, let $f:[0,1]\di \R$ be monotone, and define $a_i := f(q_i)$. 
Let $A$ be the set of pairs of  rationals $p < r$ such that there is no $i$ with $p < a_i < r$. 
For $(p,r) \in A$, define 
\be\label{deco}
x_{p,r} := \sup\{q_i : a_i \leq p\}  = \inf \{q_j : r \leq a_j\}.
\ee
It is easy to see that the reals in \eqref{deco} are equal; indeed, the existence of a rational $q_i$ between them, together with assuming that $(p,r) \in A$, leads to a contradiction.

\medskip

Then all discontinuities of $f$ will be among the elements $x_{p,r}$  for $(p,r) \in A$. Clearly, $i \mapsto a_i$, $A$ and $(p,r) \mapsto x_{p,r}$ for $(p,r) \in A$ are computable in $f$ and $\exists^2$. This ends the proof.
\end{proof}
Finally, we show that three `non-weak' realisers from Definition \ref{JDR} are in fact one and the same, in part based on Lemma \ref{korf}. 
\begin{thm}\label{thm.surprise}
Assuming $\exists^{2}$, Jordan realisers, $\SUP$-realisers, and continuity realisers are computationally equivalent.  
\end{thm}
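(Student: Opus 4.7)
The plan is to close the cycle modulo $\exists^{2}$ by establishing four implications: $\J \Rightarrow \mathcal{L}$, $\mathcal{L} \Rightarrow \mathcal{S}$, $\mathcal{S} \Rightarrow \mathcal{L}$, and $\mathcal{L} \Rightarrow \J$. The continuity-realiser $\mathcal{L}$ will function as a hub: once the discontinuity set is available, both the supremum and the Jordan decomposition fall out easily using $\exists^{2}$.

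For $\J \Rightarrow \mathcal{L}$, apply Lemma \ref{korf} to the two monotone components $g,h$ of $\J(f)=(g,h)$ and concatenate the resulting sequences. Since any discontinuity of $f = g - h$ must be a discontinuity of $g$ or $h$, the concatenation contains every point of $D_{f}$, together with possibly spurious $c$ where $g$ and $h$ jump by equal amounts. Filter by keeping only those $c$ with $f(c^{-}), f(c), f(c^{+})$ not all equal; the one-sided limits exist for BV $f$ and are computable from $g, h$ and $\exists^{2}$ (as suprema or infima of $g(q), h(q)$ over rationals $q$ approaching $c$), so the test is decidable. For $\mathcal{L} \Rightarrow \mathcal{S}$, note that every continuity point of $f$ is approached by rationals with $f$-values converging to its $f$-value, hence
\[
\sup\nolimits_{[0,1]} f \;=\; \max\Bigl(\sup\nolimits_{q \in \Q \cap [0,1]} f(q),\ \sup\nolimits_{n \in \N} f(d_{n})\Bigr),
\]
where $(d_{n})_{n\in\N} = \mathcal{L}(f)$; both suprema range over countable sequences of reals and are computable with $\exists^{2}$.

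For $\mathcal{L} \Rightarrow \J$, set $S := (\Q \cap [0,1]) \cup \{d_{n} : n \in \N\}$ and define for each $x \in [0,1]$
\[
V(x) := \sup\Bigl\{\textstyle\sum_{i=0}^{n-1}|f(t_{i+1})-f(t_{i})| : 0=t_{0}<t_{1}<\cdots<t_{n}=x,\ t_{i}\in S\cup\{x\}\Bigr\},
\]
which is a supremum over a countable computable set and hence available from $f$ and $\exists^{2}$. A standard refinement argument shows $V(x) = V_{0}^{x}(f)$: an arbitrary partition of $[0,x]$ can be refined to one drawn from $S\cup\{x\}$ with partition-sum within any prescribed error, since $f$ is continuous off $(d_{n})$ and refinement only increases the sum. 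As $V$ is monotone increasing, setting $g(x) := (V(x) + f(x))/2$ and $h(x) := (V(x) - f(x))/2$ yields the desired monotone pair with $f = g - h$.

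The main obstacle is $\mathcal{S} \Rightarrow \mathcal{L}$. Applying $\mathcal{S}$ to the BV extensions $f^{a,b}$ (equal to $f$ on $[a,b]$ and constant outside) and to $-f^{a,b}$ yields the oscillation $\omega(a,b) := \sup_{[a,b]} f - \inf_{[a,b]} f$ for all rationals $a < b$. The pointwise oscillation is $\omega^{*}(x) := \lim_{\delta \to 0^{+}} \omega(x-\delta, x+\delta)$, and $x \in D_{f}$ iff $\omega^{*}(x) > 0$. The sets $D_{k} := \{x : \omega^{*}(x) \geq 1/k\}$ are finite because $f$ is BV (disjoint witnessing intervals give a partition sum bounded below by roughly $|D_{k}|/(2k)$), and $D_{f} = \bigcup_{k} D_{k}$. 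Enumerate $D_{k}$ via the dyadic tree $T_{k} := \{\sigma \in \{0,1\}^{*} : \omega(I_{\sigma}) \geq 1/(2k)\}$, using $\exists^{2}$ to retain only those $\sigma \in T_{k}$ admitting extensions in $T_{k}$ of every length: the surviving infinite branches converge to the irrational discontinuities in $D_{k}$, while dyadic rational discontinuities must be detected by a separate direct test of $\omega^{*}$ at each dyadic rational, since such a point sits on the boundary of dyadic intervals at every scale and can force simultaneous branching on both sides. The delicate step is merging these two mechanisms into a single correctly-indexed enumeration that hits every discontinuity and nothing else, and this bookkeeping is where $\exists^{2}$ is essential.
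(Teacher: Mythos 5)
Your overall plan is sound and closes the equivalence cycle, but the routing is genuinely different from the paper's. You share two arrows with the paper: $\J \Rightarrow \mathcal{L}$ (both via Lemma \ref{korf}; the paper calls this ``immediate'' and does not bother filtering spurious points, whereas you do) and $\mathcal{L}\Rightarrow \mathcal{S}$ (identical). The paper then completes the cycle with a \emph{single} further arrow $\mathcal{S}\Rightarrow\J$, computed directly via the quantity $\Delta(f)=\sup_P\mathbf{Var}^+(P,f)$ and its approximation $\bar M(n,f)$ by $n,f$-trails built from the local suprema and infima $\Su^\pm_{a,b}$. This avoids any need to locate discontinuities explicitly. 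You instead factor through the continuity realiser twice, adding $\mathcal{L}\Rightarrow\J$ and $\mathcal{S}\Rightarrow\mathcal{L}$. Your $\mathcal{L}\Rightarrow\J$ is actually quite clean and arguably more transparent than the paper's $\mathcal{S}\Rightarrow\J$: once the discontinuity set is enumerated, the cumulative variation $V(x)=V_0^x(f)$ is a supremum over a countable, enumerable family of partitions (points from $\Q\cup\{d_n\}\cup\{x\}$), hence available from $\exists^2$, and $g=(V+f)/2$, $h=(V-f)/2$ is standard; the refinement argument you invoke to show $V(x)=V_0^x(f)$ is correct. The cost you pay is in $\mathcal{S}\Rightarrow\mathcal{L}$, which is the heaviest-lifting step in your plan and which the paper never needs: your oscillation-and-tree approach (compute $\omega(a,b)$ from $\mathcal{S}$, observe that $D_k=\{x:\omega^*(x)\geq 1/k\}$ is finite, enumerate limits of infinite branches through $T_k$ plus direct checks at dyadic rationals) can be pushed through with $\exists^2$, but the ``bookkeeping'' you defer is nontrivial: one must actually produce the enumeration of the finitely many infinite branches of the pruned tree $T_k'$ (e.g.\ for each level $n$ and node $\sigma\in T_k'\cap\{0,1\}^n$ take the lex-least infinite extension in $T_k'$, interleave over $n$ and $k$, and union with the dyadic checks), verify that every branch limit is indeed a discontinuity (via a one-sided oscillation bound when the limit is a dyadic endpoint), and handle the alternating-side behaviour at dyadic points that you correctly flag as the delicate case. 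None of this is wrong, but it is a longer route: the paper's $n,f$-trail machinery does less work precisely because it never commits to identifying the discontinuity set, instead approximating $\Delta(f)$ from below by quantities $\bar M(n,f)$ directly computable from $\Su^\pm$. In short: your hub-through-$\mathcal{L}$ architecture is a legitimate alternative, your $\mathcal{L}\Rightarrow\J$ is a nice addition, but you should flesh out the branch-enumeration endgame of $\mathcal{S}\Rightarrow\mathcal{L}$ before calling the proof complete, or replace that arrow by the paper's direct $\mathcal{S}\Rightarrow\J$.
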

\begin{proof}
We first show that a continuity realiser computes a $\SUP$-realiser. 
To this end, let $f$ be of bounded variation on $[0,1]$ and let $\mathcal{L}(f)=(x_n)_{n \in \N}$ be a list of its points of discontinuity.  From $\mathcal{L}(f)$ we can find a list $(y_j)_{j \in \N}$ containing both the points of discontinuity and all rational numbers in $[0,1]$. Then we can compute 
\be\label{gandalf}
\mathcal{S}(f) = \sup\{f(x) : x \in [0,1]\} = \sup\{f(y_j) : j \in \N\},
\ee
since $\sup\{f(y_j) : j \in \N\}$ is computable from $\mathcal{L}(f)$, $f$, and $\exists^2$. 

\medskip

Secondly, that a Jordan realiser computes a continuity realiser, assuming $\exists^{2}$, is immediate from Lemma \ref{korf}. 

\medskip

Thirdly, we show that a $\SUP$-realiser $\Su$ computes a Jordan realiser.
To this end, let $a$ and $b$ be such that $0 \leq a < b \leq 1$ and define
\be\label{fereng}
\textup{$\Su^+_{a,b}(f) := \sup_{x\in [a,b]}f(x) $ and $\Su^-_{a,b}(f) := \inf_{x\in [a,b]}f(x)$}.
\ee
These functionals are clearly computable in $\Su$, for $f:[0,1]\di \R$ a function of bounded variation.  
Now let $\Var(P,f)$ be the sum $\sum_{i=0}^{n-1} |f(x_{i+1})-f(x_{i})|$ for a partition $P = \{0 = x_0 < \cdots <  x_n = 1\}$ of $[0,1]$, while $\Var^{+}(P,f)$ is the sum of all \emph{positive} differences $f(x_{i+1}) -f(x_i)$. 
\begin{claim}\label{claim1} 
To compute a Jordan realiser, it suffices to compute $\Delta(f):=\sup_{P} \Var^+(P,f)$, where $P$ varies over all partitions of $[0,1]$.
\end{claim}
To prove Claim \ref{claim1}, we compute increasing functions $f^+$ and $f^-$ such that $f = f^+ - f^-$. Without loss of generality, we may assume that $f(0) = 0$, and by symmetry it suffices to compute $f^+$. 
We can define $f^+(0) = 0$ and $f^+(x) = \Delta(f_x)$ for $x > 0$, where $f_x(y) = f(\frac{y}{x})$. This ends the proof of Claim \ref{claim1}.

\medskip

\noindent We now employ the functionals $\Su^+_{a,b}$ and $\Su^-_{a,b}$ from \eqref{fereng} as follows. 
\begin{definition}\rm Let $f$ be of bounded variation on $[0,1]$ and  let $n \in \N$.
An \emph{$n,f$-trail} is a sequence $k_0, \ldots , k_{2m-1}$ such that $0 \leq k_0 < \ldots  < k_{2m-1} < n$ and such that when $0 \leq j < m$ we have
\[
\Su^-_{\frac{k_{2j}}{n}, \frac{k_{2j}+1}{n}}(f) < \Su^+_{\frac{k_{2j+1}}{n},\frac{k_{2j+1}+1}{n}}(f).
\]
Define the positive $n$-\emph{move}  $\bar M(n,f)$ as the maximal value of 
\[\textstyle
\sum_{j = 0}^{m-1}\Big(\Su^+_{\frac{k_{2j+1}}{n},\frac{k_{2j+1}+1}{n}}(f) - \Su^-_{\frac{k_{2j}}{n}, \frac{k_{2j}+1}{n}}(f)\Big)
\]
where $k_0, \ldots ,k_{2m-1}$ varies over all $n,f$-trails. 
\end{definition}
%
%
\begin{claim}\label{claim2}
For each $n\in \N$, we have that $\bar M(n,f) \leq \Delta(f)$.
\end{claim}
To prove Claim \ref{claim2}, it suffices to show that $\bar M(n,f) \leq \Delta(f) + \epsilon$ for each $\epsilon > 0$. Let $k_0, \ldots ,k_{2m-1}$ be an $n,f$-trail giving the value of  $\bar M(n,f)$. 
For each $j < m$,  select $x_j \in \big[\frac{k_{2j+1}}{n},\frac{k_{2j+1}+1}{n}\big]$ such that $f(x_k) > \Su^+_{\frac{k_{2j+1}}{n},\frac{k_{2j+1}+1}{n}}(f) - \frac{\epsilon}{2m}$ 
and $y_j \in \big[\frac{k_{2j}}{n}, \frac{k_{2j}+1}{n}\big]$ such that $f(y_k) <  \Su^-_{\frac{k_{2j}}{n}, \frac{k_{2j}+1}{n}}(f) + \frac{\epsilon}{2m}$.
For any partition $P$ containing all points $x_j$ and $y_j$ we have that $\Var^+(P,f) > \bar M(n,f) - \epsilon$.  As a result, we obtain 
\[
\bar M(n,f) < \Var^+(P,f) + \epsilon \leq \Delta(f) + \epsilon,
\] 
which ends the proof of Claim \ref{claim2}.
%
%
\begin{claim}\label{claim3}
Let $f$ be of bounded variation on $[0,1]$ and let $P$ be a partition of $[0,1]$. Then there is $n \in \N$ such that $\Var^+(P,f) \leq \bar M(n,f)$.
\end{claim}
%
To prove Claim \ref{claim3}, fix $P = \{0 \leq s_0  <  \cdots < s_{m'}\leq 1\}$. Without loss of generality, we may assume that $f(s_0) < f(s_1) > f(s_2) < \cdots < f(s_{m'})$, i.e. the values $f(s_i)$ alternate between going up and going down,  so $m'$ is an uneven number $2m-1$. Let $n$ be such that each $[\frac{k}{n}, \frac{k+1}{n}]$, for $k < n$, contains at most one $s_i$ and each $s_i$ is contained in exactly one $[\frac{k_i}{n}, \frac{k_i+1}{n}]$. 
Then $k_0 , \ldots ,k_{2m-1}$ is an $n,f$-trail witnessing that $\Var^+(P,f) \leq \bar M(n,f)$.
This ends the proof of Claim \ref{claim3}.  

\medskip

Finally, by the above three claims, we have $\Delta(f) = \sup_{n\in \N}\bar M(n,f)$.  
Since we can compute the latter from $\Su$ and $\exists^2$, the former is likewise computable. 
As we can compute a Jordan realiser from $\Delta$, the proof of Theorem \ref{thm.surprise} is complete.
\end{proof}
In conclusion, as perhaps expected in light of \eqref{gandalf}, rather effective and pointwise approximation results exist for functions of bounded variation \emph{at points of continuity} (see e.g.\ \cite{chin}*{p. 261}).  
For points of discontinuity, it seems one only approximates the average of the left and right limits, i.e.\ not the function value itself.

\subsection{Jordan realiser and countable sets}\label{seccer2}
We show that Jordan realisers are \emph{hard} to compute by connecting them to computability theoretic results pertaining to \emph{countable sets} from \cite{dagsamX}.
Recall the definitions from Section \ref{defki} pertaining to the latter notion.

\medskip

Now, the most fundamental property of countable sets is that they can be enumerated, i.e.\ listed as a sequence, as explicitly noted by e.g.\ Borel in \cite{opborrelen2} in his early discussions of the Heine-Borel theorem.
Next, we show that Jordan realisers can indeed enumerate countable sets as in Definition \ref{standard}. 
\begin{thm}\label{flungo}
Together with $\exists^{2}$, a Jordan realiser $\J$ can perform the following computational procedures.
\begin{itemize}
\item Given a set $A\subset [0,1]$ and $Y:[0,1]\di \N$ injective on $A$, produce a sequence $(x_{n})_{n\in \N}$ listing exactly the elements of $A$.
\item Given $F:[0,1]\di [0,1]$, $A\subset [0,1]$, and $Y:[0,1]\di \N$ injective on $A$, produce $\sup_{x\in A}F(x)$.
\end{itemize}
\end{thm}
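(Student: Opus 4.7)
The strategy is to encode $A$ as a function of bounded variation on $[0,1]$ whose discontinuity set is precisely $A$, and then extract the discontinuities using $\J$ together with Theorem~\ref{thm.surprise} (equivalently, Lemma~\ref{korf} applied to each summand of the Jordan decomposition). Specifically, I will use $f:[0,1]\di\R$ defined by $f(x):=2^{-Y(x)}$ if $x\in A$, and $f(x):=0$ otherwise. Given the characteristic function of $A$ and the injection $Y$, the value $f(x)$ is computable at any input $x$, and $\exists^{2}$ handles the associated real arithmetic.

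The first substep is to verify that $f$ has bounded variation in the sense of item \eqref{donp} of Definition~\ref{varvar}. For any partition $0=t_{0}<\cdots<t_{m}=1$ of $[0,1]$ one has $|f(t_{i+1})-f(t_{i})|\leq f(t_{i})+f(t_{i+1})$, and since $Y$ is injective on $A$, the values $Y(t_{i})$ for distinct $t_{i}\in A$ are distinct; hence $\sum_{i}f(t_{i})\leq \sum_{n\in\N}2^{-n}=2$, bounding the total variation by $4$. The second substep identifies the discontinuity set of $f$ as $A$ exactly. At $x\in A$, $f(x)=2^{-Y(x)}>0$, while $[0,1]\setminus A$ is dense since $A$ is countable and so contains no interval, so $f$ jumps at $x$. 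At $x\notin A$, given $\eps>0$ we choose $N$ with $2^{-N}<\eps$; by injectivity of $Y$, only finitely many points of $A$ satisfy $Y(y)<N$, so for $\delta>0$ small enough, on the deleted neighbourhood $(x-\delta,x+\delta)\setminus\{x\}$ one has $f<\eps$, yielding continuity at $x$.

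With $f$ in hand, feed it to $\J$ to obtain monotone $g,h:[0,1]\di\R$ with $f=g-h$. Apply Lemma~\ref{korf} to $g$ and to $h$ and interleave the resulting lists into a single sequence $(y_{n})_{n\in\N}$ enumerating $\mathrm{Disc}(g)\cup\mathrm{Disc}(h)$. Any discontinuity of $f$ must be a discontinuity of $g$ or $h$, so $A\subseteq\{y_{n}:n\in\N\}$; the containment may be strict because $\J$ is free to return a decomposition in which the jumps of $g$ and $h$ partially cancel. Using $\exists^{2}$, thin $(y_{n})$ by testing membership in $A$ (decidable since $A$ is given by its characteristic function) to obtain the required sequence enumerating exactly the elements of $A$, padding with a single element of $A$ when convenient to handle the finite/empty case. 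For the second item, once $A$ is enumerated as $(x_{n})_{n\in\N}$, one computes $\sup_{x\in A}F(x)=\sup_{n\in\N}F(x_{n})$ directly from $F$, the enumeration, and $\exists^{2}$.

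The main obstacle is the second substep above, specifically the continuity of $f$ off $A$: this uses injectivity of $Y$ essentially, since without it accumulations of $A$-points with small $Y$-values could produce spurious discontinuities outside $A$ and break the encoding. A secondary subtlety is that $\J$ may return decompositions with cancelling jumps, so one cannot read off $A$ directly from $\mathrm{Disc}(g)\cup\mathrm{Disc}(h)$; this is precisely why the $\exists^{2}$-driven filtering step is included.
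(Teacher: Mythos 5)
Your proposal is correct and follows essentially the same route as the paper: encode $A$ via $f(x) = 2^{-Y(x)}$ (the paper uses $2^{-(Y(x)+1)}$) on $A$ and $0$ elsewhere, verify bounded variation via injectivity of $Y$, apply $\J$, enumerate the discontinuities of the two monotone pieces via Lemma~\ref{korf}, and then filter this overlist with $\exists^{2}$ using membership in $A$. The only difference is that you establish $\mathrm{Disc}(f)=A$ exactly, whereas the paper only uses the needed inclusion (continuity of $f$ at $x$ forces $f(x)=0$ by density of $[0,1]\setminus A$), but this is a stylistic elaboration rather than a different argument.
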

\begin{proof}
We only need to establish the first item, as the second item readily follows from the first one using $\exists^{2}$.
Let $A\subset [0,1]$ be countable, i.e.\ there is $Y:[0,1]\di \N$ which is injective on $A$.  
Use $\exists^{2}$ to define the function $f:\R\di \R$ defined as follows:
\be\label{klamda}
f(x):=
\begin{cases}
\frac{1}{2^{Y(x)+1}} & x\in A\\
0 &\textup{ otherwise }
\end{cases}.
\ee
Following item \eqref{donp} in Definition \ref{varvar}, the function $f$ has bounded variation on $[0,1]$ as any sum $\sum_{i=0}^{n} |f(x_{i})-f(x_{i+1})|$ is at most $\sum_{i=0}^{n} \frac{1}{2^{i+1}}$ for $x_{i}$ in $[0,1]$ and $i\leq n+1$.
Now let $\J(f)=(g, h)$ be such that $f=g-h$ on $[0,1]$ and recall $\D$ from Lemma \ref{korf}.  Use $\exists^{2}$ to define the sequence $(x_{n})_{n\in \N}$ as all the reals in $\D(g)$ and $\D(h)$.  
Now consider the following formula for any $x\in [0,1]$:
\be\label{flamda}
\big[(\exists n\in \N)(x= x_{n})\wedge g(x)\ne h(x)\big] \asa x\in A.
\ee
The forward direction in \eqref{flamda} is immediate as $g(x)\ne h(x)$ for $x\in [0,1]$ implies $f(x)>0$, and hence $x\in A$ by definition. 
For the reverse direction, fix $x\in A$ and note that $0<f(x)=g(x)-h(x)$ by the definition of $f$ in \eqref{flamda}, i.e.\ $g(x)\ne h(x)$ holds.  
Moreover, in case $(\forall n\in \N)(x\ne x_{n})$, then $g$ and $h$ are continuous at $x$, by the definition of $(x_{n})_{n\in \N}$. 
Hence, $f$ is continuous at $x$, which is only possible if $f(x)=0$, but the latter implies $x\not \in A$, by \eqref{klamda}, a contradiction.
In this way, we obtain \eqref{flamda} and we may enumerate $A$ by removing from $(x_{n})_{n\in \N}$ all elements not in $A$, which can be done using $\exists^{2}$.  
\end{proof}
Secondly, weak Jordan realisers can enumerate \emph{strongly} countable sets (Definition \ref{standard}).
\begin{cor}\label{corref}
Together with $\exists^{2}$, a weak Jordan realiser $\J_{\w}$ can perform the following computational procedures.
\begin{itemize}
\item Given a set $A\subset [0,1]$ and $Y:[0,1]\di \N$ \textbf{bijective} on $A$, produce a sequence $(x_{n})_{n\in \N}$ listing exactly the elements of $A$.
\item Given $F:[0,1]\di [0,1]$, $A\subset [0,1]$, and $Y:[0,1]\di \N$ \textbf{bijective} on $A$, produce $\sup_{x\in A}F(x)$.
\end{itemize}
\end{cor}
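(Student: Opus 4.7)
The approach closely parallels the proof of Theorem \ref{flungo}: given $A \subset [0,1]$ strongly countable via $Y$, I would build a function $f$ of bounded variation whose discontinuity set encodes $A$, feed it to $\J_{\w}$ together with its total variation, and recover $A$ from the resulting monotone decomposition via Lemma \ref{korf} (as in \eqref{flamda}). The new ingredient, and the main obstacle, is that $\J_{\w}$ demands $V_{0}^{1}(f)$ as an additional input; this is exactly where the \emph{surjectivity} half of the bijectivity assumption earns its keep.

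To control the variation precisely, I would first compose with an affine bijection $\phi:[0,1]\di [1/3,2/3]$, $\phi(x)=(x+1)/3$, set $A':=\phi(A) \subset [1/3,2/3]$, and let $Y':A'\di \N$ be the induced bijection defined by $Y'(\phi(x)):=Y(x)$. Define $\tilde f:[0,1]\di\R$ by $\tilde f(y):=2^{-Y'(y)-1}$ for $y\in A'$ and $\tilde f(y):=0$ otherwise. The affine pre-composition ensures every nonzero ``spike'' of $\tilde f$ lies strictly inside $[0,1]$, so each contributes its full two-sided jump to the total variation and no endpoint corrections arise.

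The central calculation is then $V_{0}^{1}(\tilde f)=2$. For the upper bound, any partition sum is bounded by $2 \sum_{y\in A'}\tilde f(y)=2\sum_{n=0}^{\infty}2^{-n-1}=2$; here evaluating the geometric series \emph{exactly} is precisely where surjectivity of $Y$ on $A$ is essential, as injectivity alone yields only an inequality with no closed-form value. For the lower bound, for each $N$ one exhibits a partition consisting of the preimages $(Y')^{-1}(0),\dots,(Y')^{-1}(N)$ reordered along $[0,1]$, together with separating non-$A'$ points between consecutive entries (which exist in $\ZFC$ since $A'$ is countable and every open subinterval of $[0,1]$ is uncountable); the corresponding partition sums approach $2$.

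With $V_{0}^{1}(\tilde f)=2$ established, I would invoke $\J_{\w}(\tilde f,2)=(g,h)$, apply Lemma \ref{korf} with $\exists^{2}$ to enumerate the discontinuities of the monotone functions $g$ and $h$ as a single sequence $(y_{n})_{n\in\N}$, and filter via the biconditional ``$y_{n}\in A'$ iff $g(y_{n})\ne h(y_{n})$'' justified exactly as in \eqref{flamda}. Composing with $\phi^{-1}$ using $\exists^{2}$ then yields the required enumeration of $A$. The second bullet follows from the first exactly as in Theorem \ref{flungo}, using $\exists^{2}$ to take the supremum of $F$ over the enumerated sequence.
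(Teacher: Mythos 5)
Your proof is correct and takes essentially the same approach as the paper's (very terse) one: build $f$ as in \eqref{klamda}, exploit surjectivity of $Y$ to make the total variation explicitly computable, feed it to $\J_{\w}$, and recover $A$ exactly as in Theorem \ref{flungo}. One caveat worth flagging: the paper asserts the total variation is ``exactly $1$'', whereas your value of $2$ (after the affine shift into $[1/3,2/3]$) is the correct one, since each interior spike of height $2^{-(n+1)}$ contributes twice its height to a partition sum that straddles it, giving $\sum_{n}2^{-n}=2$; the slip is harmless because only computability of the variation matters, and your shift into the interior is a sensible normalization that also sidesteps the endpoint corrections (for $0$ or $1$ possibly lying in $A$) which the paper's version silently requires but which are in any case decidable from $\exists^{2}$ and the characteristic function of $A$.
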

\begin{proof}
Following item \eqref{donp2} in Definition \ref{varvar}, the function $f$ in \eqref{klamda} has total variation exactly $1$ in case $Y$ is additionally a bijection. 
\end{proof}
A weak Jordan realiser cannot compute a Jordan realiser; this remains true if we combine the former with an arbitrary type 2 functional.  
Since the proof of this claim is rather lengthy, 
 we have omitted the former from this paper.  

\medskip

Thirdly, the functional $\Omega_{\BW}$ introduced and studied in \cite{dagsamX}*{\S4}, performs the computational procedure from the second item in Theorem~\ref{flungo}, 
leading to the following corollary. 
\begin{cor}
Together with the Suslin functional $\SS^{2}$, a Jordan realiser $\J$ computes $\SS^{2}_{2}$, i.e.\ a realiser for $\SIX$. 
\end{cor}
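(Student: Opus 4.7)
The plan is a short two-step reduction via the functional $\Omega_{\BW}$ from \cite{dagsamX}*{\S4}, which, as noted in the paragraph preceding the corollary, performs exactly the computational task listed as the second item of Theorem \ref{flungo}. Since the Suslin functional $\SS^{2}$ trivially computes Kleene's $\exists^{2}$, the second item of Theorem \ref{flungo} immediately yields that $\J$ together with $\SS^{2}$ computes $\Omega_{\BW}$: given $F:[0,1]\di[0,1]$, $A\subset [0,1]$ and $Y:[0,1]\di\N$ injective on $A$, one can recover $\sup_{x\in A}F(x)$ by feeding the characteristic data of $A$ into a Jordan realiser and then taking a supremum over the resulting enumeration together with the rationals, as in \eqref{gandalf}.

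The remaining step is to invoke the result from \cite{dagsamX} that $\Omega_{\BW}$ combined with $\SS^{2}$ computes $\SS_{2}^{2}$, i.e.\ a realiser for $\SIX$. Concatenating the two reductions establishes the corollary, since S1-S9 computability is transitive: from $\J$ and $\SS^{2}$ we compute $\Omega_{\BW}$, and from $\Omega_{\BW}$ and $\SS^{2}$ we compute $\SS_{2}^{2}$.

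The essential work is thus already contained in Theorem \ref{flungo} together with the black-box analysis of $\Omega_{\BW}$ in \cite{dagsamX}; no new technical argument is required here. If one were to unpack the cited computation, the main obstacle would be the standard hyperarithmetical reduction showing that deciding $\Sigma_{2}^{1}$-formulas, given $\SS^{2}$ as an oracle for $\Sigma_{1}^{1}$-truth, can be carried out by supremising over a countable set of $\Sigma_{1}^{1}$-witnesses \emph{indexed by an injection into }$\N$ \textemdash\ precisely the kind of task performed by $\Omega_{\BW}$, and hence, via Theorem \ref{flungo}, by $\J+\SS^{2}$.
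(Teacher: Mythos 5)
Your proof is correct and follows essentially the same route as the paper: compute $\Omega_{\BW}$ from $\J$ and $\exists^{2}$ via the second item of Theorem \ref{flungo}, then invoke \cite{dagsamX}*{Theorem 4.6.(b)} that $\Omega_{\BW}+\SS^{2}$ computes $\SS^{2}_{2}$, and conclude by transitivity of S1-S9 computability. The paper's proof is exactly this two-line reduction; your closing paragraph speculating on how the cited computation might be unpacked is extra but harmless.
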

\begin{proof}
By \cite{dagsamX}*{Theorem 4.6.(b)}, $\Omega_{\BW}+ \SS^{2}$ computes $\SS_{2}^{2}$, while a Jordan realiser $\J$ computes $\Omega_{\BW}$ by Theorem \ref{flungo}.
\end{proof}
Finally, under the additional set-theoretic hypothesis $\textsf{V=L}$, the combination $\Omega_{\BW}+ \SS^{2}$ even computes $\exists^{3}$ by \cite{dagsamX}*{Theorem 4.6.(c)}.
An obvious question is whether a similar result can be obtained within $\ZFC$, which is the topic of the following section.  

\subsection{Computing Kleene's $\exists^{3}$ from Jordan realisers}\label{seccer3}
We show that Kleene's quantifier $\exists^{3}$ is computable in the combination of:
\begin{itemize}
\item Kleene's quantifier $\exists^2$,
\item any Jordan realiser $\J$ (or: $\Omega_{\BW}$ from Section \ref{seccer2}),
\item a well-ordering $\prec$ of $[0,1]$.  
\end{itemize}
We note that the third item exists by the Axiom of Choice.  Assuming $\prec$ and $\preceq$ are the irreflexive and reflexive versions of the same well-ordering of $[0,1]$, they are computable in each other and $\exists^2$.
\begin{theorem}\label{thm.first}
Let $\J$ be a Jordan realiser. Then $\exists^3$ is Kleene-computable in $\J$, $\prec$, and $\exists^2$.
\end{theorem}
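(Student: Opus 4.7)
The plan is to reduce computing $\exists^{3}$ to the following decision problem: given $G:[0,1]\to \N$ (coded from the input of $\exists^{3}$ via the standard embedding $\N^{\N}\hookrightarrow [0,1]$, computable in $\exists^{2}$), decide whether $A_{G}:=G^{-1}(0)$ is nonempty. The strategy is to transfinitely enumerate $[0,1]$ in $\prec$-order and test $G$ at each element.

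Inductively, define $a_{\alpha}\in [0,1]$ as the $\prec$-least element of $[0,1]\setminus C_{\alpha}$, where $C_{\alpha}:=\{a_{\beta}:\beta<\alpha\}$. Successor stages are addressed by treating $C_{\alpha+1}=C_\alpha\cup\{a_\alpha\}$ as the countable set it is and invoking the same machinery used at countable limits. At a countable limit ordinal $\alpha$, assume by the recursive construction that $C_{\alpha}$ is available as a sequence $(c_{n})_{n\in \N}$, and define the bounded variation function $h_{\alpha}:[0,1]\to\R$ by $h_{\alpha}(c_{n}):=2^{-n-1}$ and $h_{\alpha}(x):=0$ otherwise, whose total variation is at most $1$. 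Applying $\J$ yields a monotone decomposition whose discontinuities, extracted via Theorem~\ref{thm.surprise} and Lemma~\ref{korf}, are exactly $C_{\alpha}$; this gives $C_{\alpha}$ as a concrete set. Combined with $\prec$ and $\exists^{2}$, one locates $a_{\alpha}$ as the $\prec$-least element of the upper cone $\{y\in[0,1]:(\forall n\in\N)(c_{n}\prec y)\}$. At each stage we test $G(a_{\alpha})=0$: a match certifies $A_{G}\ne\emptyset$, while exhausting the iteration without a match certifies $A_{G}=\emptyset$.

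The main obstacle is twofold. First, limit ordinals of uncountable cofinality (which arise if $|[0,1]|>\aleph_{1}$) prevent $C_{\alpha}$ from being enumerated as an $\N$-indexed sequence, so the direct construction of $h_{\alpha}$ above is inadequate; the resolution uses $\prec$ canonically to pick refined cofinal systems and iterates $\J$ on bounded variation functions associated to them. Second, even at countable stages, selecting the $\prec$-least element of an upper cone $\{y:(\forall n)(c_{n}\prec y)\}$ from its characteristic function is itself the computational heart: one constructs a further bounded variation function whose jumps are forced by $\prec$ to cluster uniquely at this $\prec$-infimum, and applies $\J$ to read it off. Justifying that the overall transfinite recursion along $\prec$ is expressible in Kleene's S1-S9 with the given oracles relies on $\prec$ supplying well-foundedness while $\J$ supplies, at each stage, the conversion from countable-set data to bounded variation data whose Jordan decomposition is computable in $\exists^{2}$; together these power the iteration across all $|[0,1]|$-many stages and yield $\exists^{3}$.
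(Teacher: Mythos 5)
Your plan of enumerating $[0,1]$ in $\prec$-order and materialising the initial segments $C_{\alpha}=\{a_{\beta}:\beta<\alpha\}$ as countable sets encoded into bounded-variation functions breaks down at exactly the point you flag, and the "resolution" you sketch does not actually resolve it. A bounded-variation function on $[0,1]$ has only countably many discontinuities, so the encoding $h_{\alpha}(c_{n})=2^{-n-1}$ is only available while $C_{\alpha}$ is countable, i.e.\ for $\alpha<\omega_{1}$. Once $\alpha\ge\omega_{1}$ (which happens whenever $\textsf{CH}$ fails, and the theorem is claimed in $\ZFC$), there is no bounded-variation function whose discontinuity set is $C_{\alpha}$, so there is nothing for $\J$ to decompose; "refined cofinal systems" cannot restore this, because the obstruction is cardinality of the discontinuity set, not cofinality of $\alpha$. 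Your second mechanism has the same problem in miniature: there is no recipe for a bounded-variation function whose jumps "cluster uniquely at the $\prec$-infimum" of an upper cone, since $\prec$ has no relation to the metric on $[0,1]$, and you would need to already know that infimum to place a jump there.

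The paper's proof avoids materialising any uncountable initial segment. It first replaces $\J$ by a functional $\Omega$ which, given a set $X\subseteq\R$ with at most one element, returns that element when $X$ is a singleton; $\Omega$ is what a Jordan realiser genuinely buys you, via its ability to enumerate the (here, at most one) points of discontinuity. Then, for $h:[0,1]\to\{0,1\}$, the functionals $E_{\prec x}(h)$ (deciding $(\exists y\prec x)(h(y)=1)$) are defined by the S1--S9 recursion theorem: one builds $h_{x}$ taking value $1$ at at most one point (the $\prec$-least $y\prec x$ with $h(y)=1$, if any, using $E_{\prec y}$ to test minimality), and applies $\Omega$ to $h_{x}$. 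Termination for each $x$ is proved by transfinite induction on $\prec$ in the metatheory; at no stage does the computation need a listing of $\{y:y\prec x\}$, so no countability hypothesis is required. You would need to rebuild your argument along these lines: use $\J$ to select from singletons, not to enumerate initial segments, and use the recursion theorem rather than an explicit stage-by-stage enumeration.
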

\begin{proof}
We actually prove a slightly stronger result. Let $\Omega$ be a partial functional of type 3 such that $\Omega(X)$ terminates whenever $X\subset \R$ has at most one element, and $\Omega(X) \in X$ whenever $X$ contains exactly one element. 
One readily\footnote{To obtain an enumeration of $A\subset [0,1]$ given $Y:[0,1]\di \R$ injective on $A$, define $E_{n}:=\{x\in A: Y(x)=n\}$ and define $x_{n}:= \Omega(E_{n})$ in case the latter is in $E_{n}$, and $0$ otherwise.} shows that $\Omega$ is computationally equivalent to $\Omega_{\rm BW}$, given $\exists^{2}$. We now show that $\exists^3$ is computable in $\Omega$, $\prec$, and $\exists^2$. 

\medskip

We let $x,y$ vary over $[0,1]$ and we fix $h:[0,1] \rightarrow \{0,1\}$. We aim to compute $\exists^3(h)$ by deciding the truth of the formula $(\exists x\in [0,1]) (h(x) = 1)$. 
 To this end, consider the functionals $E_{\prec x}$ and $E_{\preceq x}$ defined as:
\[
\textup{$E_{\prec x}(h) = 1 \leftrightarrow (\exists y \prec x) (h(y) = 1)$ and $E_{\preceq x}(h) = 1 \leftrightarrow (\exists y \preceq x) (h(y) = 1)$}.
\]
We shall show that these are computable in $\Omega$ and $\exists^2$, uniformly in $x$. Note that $E_{\preceq x}$ is trivially computable in $E_{\prec x}$, uniformly in $x$, so we settle for computing $E_{\prec x}(h)$. The argument will be by the recursion theorem, so we give the algorithm for computing $E_{\prec x}(h)$ using $x$, $h$, $\Omega$  and $E_{\prec y}$ for $y \prec x$.
Now let $x$ and $h$ be fixed and define $h_x$ as:
\[
h_{x}(y):=
\begin{cases}
0 & {\rm if }~  x \preceq y \\ 
h(y) & {\rm if }~ y \prec x \wedge (\forall z \prec y) (h(z) = 0)  \\ 
0 & {\rm otherwise}  
\end{cases},
 \]
where we use $E_{\prec y}$ to decide whether the second case holds. Then $h_x$ is constant zero if $(\forall y \prec x) (h(y) = 0)$, and if not, $h_x$ takes the value 1 in exactly the \emph{least} point $y \prec x$ where $h(y) = 1$. Hence, we have
\[
(\exists y \prec x)(h(y) = 1) \leftrightarrow [\Omega(h_x) \prec x \wedge h(\Omega(h_x)) = 1].
\]
We now apply the recursion theorem to find an index $e\in \N$ such that for all $x\in [0,1]$ and $h:[0,1]\di \{0,1\}$ (and well-orderings $\preceq)$, we have 
\[
\{e\}(\Omega, h,x,\preceq)  \simeq E_{\preceq x}(h).
\]  
Then we use transfinite induction over $\preceq$ to prove that $e\in \N$ defines a total functional doing what it is supposed to do. 
For readers not familiar with this use of the recursion theorem, what we do is defining $\{e_0\}(d,\Omega,h,x,\preceq)$ as in the construction, but replacing all uses of $E_{\preceq y}(h')$ by $ \{d\}(\Omega,h',y,\preceq)$;
we then use the fact that there is an index $e$ such that $\{e\}(\dots)\simeq  \{e_0\}(e,\dots)$, where `$\dots$' are the other parameters.

\medskip

Having established the computability of each $E_{\prec x}$ from $x$, we can use the same trick to compute $\exists^3(h)$ as follows: construct from $h$ a function $h'$ that takes the value 1 in at most one place, namely the $\prec$-least $x$ where $h(x) = 1$, in case such exists. 
\end{proof}
Finally, we note that Kleene's computation scheme S9 is essentially a `hard-coded' version of the recursion theorem for S1-S9, while S1-S8 merely define (higher-order) primitive recursion.  
In this way, the recursion theorem is central to S1-S9, although we have previously witnessed S1-S9 computations via primitive recursive terms.  

\subsection{Jordan realisers and the uncountability of $\R$}\label{seccer4}
We show that a number of interesting functionals, including `heavily restricted' Jordan realisers,
are (still) quite hard to compute, based on the computational properties of the uncountability of $\R$ pioneered in \cite{dagsamX}. 

\medskip

First of all, in more detail, Theorem \ref{thm.surprise} implies that a Jordan realiser can enumerate all points of discontinuity of a function of bounded variation. 
It is then a natural question whether Jordan realisers remain hard to compute if we only require the output to be e.g.\ \emph{one} point of continuity.  
By way of an answer, Theorem \ref{floeper} lists a number of interesting functionals -including the aforementioned `one-point' Jordan realisers- that compute 
functionals witnessing the uncountability of $\R$.  Functionals related to the uncountability of $\R$ are special in the following sense.  

\medskip

By the previous sections, Jordan realisers have surprising properties and are a nice addition to the pantheon of interesting non-normal functionals stemming from ordinary mathematics (see \cites{dagsam, dagsamII, dagsamIII, dagsamV, dagsam VII, dagsamVI, dagsamX, dagsamIX, dagsamXI} or Section \ref{kle9} for other examples).  It is then a natural question what the \emph{weakest} such functional is; a candidate is provided by the \emph{uncountability of $\R$}, 
which can be formulated in various guises as follows.
\begin{itemize} 
\item Cantor's theorem: there is no surjection from $\N$ to $\R$.
\item $\NIN$: there is no injection from $[0,1]$ to $\N$.
\item $\NBI$: there is no bijection from $[0,1]$ to $\N$.
\end{itemize}
Cantor's theorem is provable in constructive and computable mathematics (\cites{bish1, simpson2}), while there is even an efficient algorithm to compute from a sequence of reals, a real not in that sequence (\cite{grayk}). 
As explored in \cite{dagsamX}, $\NIN$ and $\NBI$ are hard to prove in terms of conventional comprehension.   
We will not study $\NBI$ in this paper while Cantor's theorem and $\NIN$ give rise to the following specifications.  
\bdefi[Realisers for the uncountability of $\R$]~
\begin{itemize}
\item A \emph{Cantor functional/realiser} takes as input $A\subset [0,1]$ and $Y:[0,1]\di \N$ such that $Y$ is injective on $A$, and outputs $x\not \in A$.  
\item A \emph{\textbf{weak} Cantor realiser} takes as input $A\subset [0,1]$ and $Y:[0,1]\di \N$ such that $Y$ is \textbf{bijective} on $A$, and outputs $x\not \in A$.  
\item A $\NIN$-\emph{realiser} takes as input $Y:[0,1]\di \N$ and outputs $x,y\in [0,1]$ with $x\ne y \wedge Y(x)=Y(y)$.  
\end{itemize}
\edefi
As explored in \cite{dagsamX}, $\NIN$-realisers are among the weakest non-normal functionals originating from ordinary mathematics we have studied. 
Moreover, one readily\footnote{Let $N$ be a $\NIN$-realiser and let $A\subset [0,1]$ and $Y:[0,1]\di \N$ be such that $Y$ is injective on $A$.  Define $Z:[0,1]\di \N$ as follows: $Z(x):=Y(x)+1$ in case $x\in A$, and $0$ otherwise.  Clearly, $N(Z)(1)\not \in A$ as required for a Cantor functonal.} proves that a $\NIN$-realiser computes a Cantor realiser, while the latter are still hard to compute as follows.  
\begin{thm}\label{forgukel}
No type 2 functional computes a weak Cantor realiser.  
\end{thm}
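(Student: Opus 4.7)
My plan is to prove Theorem \ref{forgukel} by contradiction, via a diagonalisation exploiting the countability of queries in any Kleene S1-S9 computation relative to a type 2 oracle, in the spirit of similar non-computability results for $\NIN$-realisers in \cite{dagsamX}. Suppose that $F$ is a type 2 functional and $\Phi$ is a Kleene algorithm witnessing a weak Cantor realiser, i.e.\ $\Phi^{F,A,Y}\in[0,1]\setminus A$ for every valid input $(A,Y)$, meaning $Y|_A:A\to\N$ is a bijection. I aim to exhibit a valid $(A^\star,Y^\star)$ with $\Phi^{F,A^\star,Y^\star}\in A^\star$, contradicting the assumption.

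The key technical tool is the standard fact that a Kleene computation relative to a type 2 oracle $F$ and type 2 inputs $A,Y$ has a well-founded computation tree of countable rank; hence the set $S(A,Y)\subset[0,1]$ of arguments at which the computation queries $A$ or $Y$ is countable, and the output depends only on $F$ together with the restrictions $A|_{S(A,Y)}$ and $Y|_{S(A,Y)}$. In particular, if $(A',Y')$ agrees with $(A,Y)$ on $S(A,Y)$, a straightforward induction on the computation tree shows that the two computations trace identically and produce the same output.

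The central move is a swap. Starting from any valid $(A_0,Y_0)$, compute $x_0:=\Phi^{F,A_0,Y_0}$ and $S_0:=S(A_0,Y_0)$. If I can choose $n\in\N$ such that both $z:=Y_0^{-1}(n)\notin S_0$ and $x_0\notin S_0$, then define
\[
A_1:=(A_0\setminus\{z\})\cup\{x_0\},\qquad Y_1(x_0):=n,\ Y_1(z):=0,\ Y_1\equiv Y_0 \text{ elsewhere}.
\]
By construction $(A_1,Y_1)$ is valid and agrees with $(A_0,Y_0)$ on $S_0$, so by the preceding paragraph $\Phi^{F,A_1,Y_1}=x_0$. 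But $x_0\in A_1$, yielding the desired contradiction.

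The main obstacle is guaranteeing that the swap is available, i.e.\ arranging $x_0\notin S_0$ and $A_0\not\subseteq S_0$ simultaneously. I plan to handle this via a transfinite construction of length $\omega_1$: if the swap fails at stage $\alpha$, enlarge $A_\alpha$ by a real selected from $[0,1]\setminus(A_\alpha\cup S_\alpha)$, which is nonempty by uncountability of $[0,1]$, adjust $Y_\alpha$ accordingly to preserve bijectivity, and pass to stage $\alpha+1$, taking unions at limits. A Fodor-style pressing-down argument applied to the countable query sets $S_\alpha$, using the regularity of $\omega_1$, then forces the swap configuration to materialise at some countable stage $\alpha<\omega_1$, closing the proof. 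The delicate point will be controlling how $S_\alpha$ evolves under the repeated extensions, and this is where I expect most of the bookkeeping to be concentrated.
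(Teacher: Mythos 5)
Your two technical observations — that a terminating S1--S9 computation relative to type-2 oracles has a countably-branching well-founded tree, hence a countable query set $S(A,Y)$, and that the output is determined by $F$ together with the restrictions of $A$ and $Y$ to $S(A,Y)$ — are both correct and are indeed the engine behind such non-computability results. The swap lemma itself is also sound as stated: \emph{if} you can find $n$ with $z:=Y_0^{-1}(n)\notin S_0$ and also $x_0\notin S_0$, then $(A_1,Y_1)$ is valid, agrees with $(A_0,Y_0)$ on $S_0$, and forces $\Phi^{F,A_1,Y_1}=x_0\in A_1$, a contradiction.

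The gap is in the mechanism you propose for reaching a swappable configuration, and it is not a bookkeeping issue but a genuine obstruction. Since $Y_\alpha\restriction A_\alpha$ is required to be a \emph{bijection} onto $\N$, every natural number is already in use; adding a new point $w$ to $A_\alpha$ therefore forces a global relabelling of $Y_\alpha$ on infinitely many old points of $A_\alpha$, which in general destroys agreement on $S_\alpha$ and changes the computation (and hence $S_{\alpha+1}$ and $x_{\alpha+1}$) in an uncontrolled way. Worse, if the bad case $A_0\subseteq S_0$ occurs, there is \emph{no} valid $(A_1,Y_1)\neq(A_0,Y_0)$ that agrees with $(A_0,Y_0)$ on $S_0$: agreement on $S_0\supseteq A_0$ already pins down a bijection onto all of $\N$, so injectivity leaves no room to add or move anything. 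Consequently the ``enlarge and press down'' plan is dead on arrival in exactly the case where the swap is unavailable, and the invoked Fodor-style argument has no regressive function to press down. Taking unions at limits also fails because the $Y_\alpha$'s cannot form a compatible chain under the forced relabellings.

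The paper sidesteps all of this by choosing the input adversarially from the start rather than by transfinite approximation. Fix $F$ (w.l.o.g.\ computing $\exists^2$), take $A$ to be the set of reals computable in $F$, and use Gandy selection to define $Y$ on $A$ as an $F$-computable choice of Kleene index; then extend $A$ to a countable $B$ and $Y$ to $Z$ bijective on $B$. The point is a ``trap'' argument: by induction on the computation tree, every real queried by $\cc^{F,B,Z}$ is itself $F$-computable, so the membership oracle always answers ``yes'' and the $Z$-oracle answers $F$-computably via Gandy selection; hence the whole computation, and its output, is $F$-computable, i.e.\ lies in $A\subseteq B$ — contradicting the defining property of the realiser. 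This is structurally different from your diagonalisation: it never needs to modify the input, so the bijectivity constraint causes no trouble. If you want to repair your approach you would need a way to ensure $A_0\not\subseteq S_0$ and $x_0\notin S_0$ for some single well-chosen valid $(A_0,Y_0)$; as your construction stands, neither is guaranteed, and the transfinite escalation does not supply them.
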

\begin{proof}
Fix some functional $F^{2}$ and assume wlog that $F$ computes $\exists^{2}$.
Assume there is a Cantor realiser $\cc$ computable in $F$.
Now let $A$ be the set of reals computable in $F$ and define $Y:[0,1]\di \N$ as follows using \emph{Gandy selection} (see \cite{dagsamIX, longmann} for an introduction): 
for the first part, define $Y(x)$ as an index for computing $x$ from $F$ in case $x \in A$; we put $Y(x):=0$ in case $x\not \in A$. 
By assumption, $\cc(A,Y)$ terminates as $Y$ is injective on $A$. 
Since the restriction of $Y$ to $A$ is partially computable in $F$, all oracle calls of the form `$x \in A$' will be answered with yes, since $x$ then is computable in $F$.  Hence, all oracle calls for the value $Y(x) $ can be answered \emph{computably in $F$}.  In this way, $\cc(A, Y)$ is computable in $F$, which also follows from \cite{dagsamX}*{Lemma 2.15}. 
Hence, $\cc(A,Y) \in A$ by definition, a contradiction.  The proof remains valid if we extend $A$ to some $B\subset [0,1]$ and extend $Y$ to $Z:[0,1]\di \N$ bijective on $B$.
\end{proof}
Secondly, it is fairly trivial to prove (classically) that there is no \emph{continuous} injection from $[0,1]$ to $\Q$, based on the intermediate value theorem.
Now consider the following principle, which expresses a \emph{very special case} of the uncountability of $\R$.
\begin{itemize} 
\item $\NIN_{\textsf{\textup{BV}}}$: there is no injection from $[0,1]$ to $\Q$ that has \emph{bounded variation} (item \eqref{donp} in Definition \ref{varvar}).  
\end{itemize}
One readily establishes the equivalence $\NIN\asa \NIN_{\textsf{\textup{BV}}}$ over a weak system, following the proof of Theorem \ref{floeper}. 
By the latter and Theorem~\ref{forgukel}, while $\NIN$-realisers are defined for all $Y:[0,1]\di \N$, the restriction to functions of bounded variation, which only have countably many points of discontinuity, is (still) hard to compute \emph{and} intermediate between Cantor and $\NIN$-realisers. 

\medskip

Thirdly, we have the following theorem where the functional 
\[\textstyle
\LL(f)(s):=\int_{0}^{+\infty} e^{-st}f(t)~dt 
\] 
is the \emph{Laplace transform} of $f:\R\di \R$.
Since we restrict to functions of bounded variation, we interpret $\LL(f)$ as the limit of Riemann integrals, if the latter exists.  
It is well-known that if $\LL(f)$ and $\LL(g)$ exists and are equal everywhere, $f$ and $g$ are equal almost everywhere, inspiring the final -considerably weaker- item in Theorem \ref{floeper}.
In the below items, `bounded variation' refers to item \eqref{donp} of Definition~\ref{varvar}.
\begin{thm}\label{floeper}
Assuming $\exists^{2}$, a Cantor realiser can be computed from a functional performing any of the following tasks.
\begin{itemize}
\item For $f:[0,1]\di \Q$ which has bounded variation, find $x, y\in [0,1]$ such that $x\ne y$ and $f(x)= f(y)$. 
\item For $f:[0,1]\di \R$ which has bounded variation, find a point of continuity in $[0,1]$.
\item If $f:[0,1]\di [0, 1]$ is Riemann integrable \(or has bounded variation\) with $\int_{0}^{1}f(x)~dx =0$, find $ y\in [0,1]$ with $f(y)=0$.  
\item If $f, g:\R\di \R$ satisfy the following:
\begin{itemize}
\item $f, g$ have bounded variation on $[0, a]$ for any $a\in \R^{+}$, 
\item $\LL(f)$ and $\LL(g)$ exists and are equal on $[0, +\infty)$, 
\end{itemize}
find $x\in (0, \infty)$ with $f(x)=g(x)$.
\end{itemize}
Any $\NIN$-realiser computes a functional as in the first item.  Assuming $\exists^{2}$, a \textbf{weak} Cantor realiser can be computed from a functional performing any of the above tasks restricted as in item \eqref{donp2} of Definition~\ref{varvar}..  
\end{thm}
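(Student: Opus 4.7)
The plan is to handle all four items by a uniform construction: given input $(A,Y)$ for a Cantor realiser, where $Y:[0,1]\di\N$ is injective on $A\subset[0,1]$, I would build the function
\[
f(x):=
\begin{cases} 2^{-Y(x)-1} & \text{if } x\in A, \\ 0 & \text{otherwise,} \end{cases}
\]
which is computable from $Y$ and $\exists^{2}$, exactly as in the proof of Theorem \ref{flungo}. This $f$ maps $[0,1]$ into $\Q\cap[0,1/2]$, has total variation at most $2$ by the geometric series together with the injectivity of $Y$ on $A$, is discontinuous at every $x\in A$ (any neighbourhood of $x$ contains non-$A$ points where $f=0$, whereas $f(x)>0$), and is Riemann integrable with $\int_{0}^{1} f = 0$ since $A$ is Lebesgue-null.

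I would then feed $f$ to the given realiser and use $\exists^{2}$ to inspect the output. For item 1, any pair $x\ne y$ with $f(x)=f(y)$ cannot have both $x,y\in A$ -- that would force $Y(x)=Y(y)$, contradicting injectivity -- so at least one of $x,y$ lies in $[0,1]\setminus A$, and this is detected via the test $f(\cdot)=0$. For item 2, every $x\in A$ is a discontinuity of $f$, so any continuity point returned by the realiser lies outside $A$. For item 3, $f$ vanishes off $A$, hence $f(y)=0$ immediately yields $y\notin A$. For item 4, I would extend $f$ by $0$ on $\R\setminus[0,1]$ and take $g\equiv 0$; both functions have bounded variation on $[0,a]$ for every $a>0$, and since $f=g$ outside the null set $A$ the Riemann-integral limits defining $\LL(f)$ and $\LL(g)$ exist and vanish identically on $[0,\infty)$. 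The realiser then returns some $x\in(0,\infty)$ with $f(x)=g(x)=0$, forcing $x\notin A$.

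The $\NIN$-realiser claim follows by composing a given task-$1$ input $f:[0,1]\di\Q$ with a computable bijection $\Q\to\N$ and applying the $\NIN$-realiser directly. For the weak Cantor variant, when $Y$ is bijective on $A$ the range $Y(A)$ exhausts $\N$; by separating finitely many points of $A$ via non-$A$ points -- always possible since $A$ is countable inside the uncountable $[0,1]$ -- the partial variation sums converge to $\sum_{n=0}^{\infty}2^{-n}=2$, so $V_{0}^{1}(f)=2$ exists in the sense of item \eqref{donp2} and can be supplied alongside $f$ to the restricted realiser. The arguments above then transfer verbatim. The main points requiring care are the variation and discontinuity analyses of the ad hoc function $f$, together with verifying in item 4 that $\LL(f)$ is a bona fide limit of Riemann integrals; once these are established, the reductions themselves are essentially formal.
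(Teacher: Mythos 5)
Your arguments for items 1--3 and for the $\NIN$-realiser claim are essentially the paper's (the paper gives a bare-hands $\eps$-$\delta$ argument for $\int_0^1 f = 0$ rather than invoking the Lebesgue criterion, but both are fine), and your correction of the total variation to $2$ in the bijective case is actually more careful than the text. The problem is item 4. You extend $f$ by $0$ on $\R\setminus[0,1]$ and take $g\equiv 0$. Then $\{x\in(0,\infty):f(x)=g(x)\}$ contains all of $(1,\infty)$, so the task functional may as well return $x=2$. Formally $2\notin A$ since $A\subset[0,1]$, but the ``Cantor realiser'' you have built is a constant functional, which is trivially type-$2$ computable; this cannot be the intended notion, since Theorem~\ref{forgukel} shows no type-$2$ functional computes a Cantor realiser, and the proof there (computing the output from $F$ and deriving $\cc(A,Y)\in A$) only makes sense when the output is required to land in $[0,1]$. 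So your reduction for item 4 produces a degenerate witness and does not establish the claim.

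The paper's fix, which you are missing, is to first transport the data: use a bijection between $(0,1)$ and $\R$ (tangent/arctangent), push $A$ and $Y$ forward to a countable $A'\subset\R$ with an injection $Y':\R\di\N$, and build $f$ from $A',Y'$ as an $\R\di\R$ function. Now $f$ is positive precisely on $A'$, which is spread over all of $(0,\infty)$, so any $x$ with $f(x)=g(x)=0$ genuinely avoids $A'$; pulling $x$ back through the inverse bijection yields a point of $(0,1)\setminus A$ as required. Once this transport step is in place, the rest of your reduction goes through.
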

\begin{proof}
The penultimate sentence is immediate as $\Q$ and $\N$ are bijective.  
Now fix some countable set $A\subset [0,1]$ and $Y:[0,1]\di \N$ injective on $A$.  
Consider $f:[0,1]\di \R$ as in \eqref{klamda} and recall it has bounded variation.  
For the second item, if $x_{0}\in [0,1]$ is a point of continuity of $f$, we must have $f(x_{0})=0$ by continuity.  
Then $x_{0}\not\in A $ by definition, as required.   
For the first item, in case  $x\ne y$ and $f(x)= f(y)$, we must have $x\not \in A$ or $y\not \in A$, in light of \eqref{klamda}.

\medskip

For the third item, consider $f:[0,1]\di \R$ as in \eqref{klamda}; that $\int_{0}^{1}f(x)~dx $ exists and equals $0$, follows from the usual $\eps$-$\delta$-definition of Riemann integrability. 
Indeed, fix $\eps_{0}>0$ and find $k_{0}\in \N$ such that $\frac{1}{2^{k_{0}}}<\eps_{0}$.  
Let $P$ be a partition $x_{0}:=0, x_{1}, \dots, x_{n}, x_{n+1}:=1$ of $[0,1]$ with $t_{i}\in [x_{i}, x_{i+1}]$ for $i\leq n$ and with mesh $\|P\|:=\max_{i\leq n }(x_{i+1}-x_{i})$ at most $ \frac{1}{2^{k_{0}}}$.  
Then the Riemann sum $S(f, P):=\sum_{i\leq n}f(t_{i})(x_{i+1}-x_{i})$ satisfies
\[\textstyle
S(f, P)\leq \frac{1}{2^{k_{0}}} \sum_{i\leq n}f(t_{i})\leq \frac{1}{2^{k_{0}}}\sum_{i\leq n}\frac{1}{2^{i+1}}\leq \frac{1}{2^{k_{0}}}, 
\]
as $Y$ is injective on $A$ and $f$ is zero outside of $A$.
Hence, $\int_{0}^{1}f(x)~dx =0$ and any $y\in [0,1]$ with $f(y)=0$ yields $y\not \in A$ by \eqref{klamda}.

\medskip

For the final item, the tangent and arctangent functions provide bijections between $(0,1)$ and $\R$.  Hence, we may work with $A\subset \R$ and $Y:\R\di \N$ injective on $A$.  
We again consider $f$ as in \eqref{klamda}, now as an $\R\di \R$-function.  This function $f$ has bounded variation on any interval $[0, a]$ for $a>0$, in the same way as in the proof of Theorem \ref{flungo}.
Since $e^{-z}\leq 1$ for $z\geq0$, the previous paragraph yields $\int_{0}^{N} e^{-st}f(t)~dt =0$ for any $N\in \N, s \geq 0$, and hence $\LL(f)(s)=0$ for all $s\geq 0$.
For $g:\R\di \R$ the zero everywhere function, we trivially have $\LL(g)(s)=0$ for any $s\geq 0$.  
Clearly, any $x\in [0, +\infty)$ such that $f(x)=g(x)=0$ also satisfies $x\not \in A$, yielding a Cantor functional. 
The final sentence now follows in light of (the proof of) Corollary \ref{corref}.
\end{proof}
We note that theorem also goes through if we formulate the second item using the much weaker notions of {quasi}\footnote{A function $ f:X\rightarrow \mathbb {R} $ is \emph{quasi-continuous} (resp.\ \emph{cliquish}) at $x\in X$ if for any $ \epsilon > 0$ and any open neighbourhood $U$ of $x$, there is a non-empty open ball ${ G\subset U}$ with $(\forall y\in G) (|f(x)-f(y)|<\eps)$ (resp.\ $(\forall y, z\in G) (|f(z)-f(y)|<\eps)$).\label{fulgsde}} or cliquish$^{\ref{fulgsde}}$ continuity in at least one point in $[0,1]$.  These notions are found in e.g.\ \cites{nieuwebron, kowalski}.

\begin{bibdiv}
\begin{biblist}
\bib{avi2}{article}{
  author={Avigad, Jeremy},
  author={Feferman, Solomon},
  title={G\"odel's functional \(``Dialectica''\) interpretation},
  conference={ title={Handbook of proof theory}, },
  book={ series={Stud. Logic Found. Math.}, volume={137}, },
  date={1998},
  pages={337--405},
}

\bib{beeson1}{book}{
  author={Beeson, Michael J.},
  title={Foundations of constructive mathematics},
  series={Ergebnisse der Mathematik und ihrer Grenzgebiete},
  volume={6},
  publisher={Springer},
  date={1985},
  pages={xxiii+466},
}

\bib{bish1}{book}{
  author={Bishop, Errett},
  title={Foundations of constructive analysis},
  publisher={McGraw-Hill},
  date={1967},
  pages={xiii+370},
}

\bib{opborrelen2}{book}{
  author={Borel, E.},
  title={Le\c {c}ons sur la th\'eorie des fonctions},
  year={1898},
  publisher={Gauthier-Villars, Paris},
  pages={pp.\ 136},
}

\bib{brich}{book}{
  author={Bridges, Douglas},
  author={Richman, Fred},
  title={Varieties of constructive mathematics},
  series={London Mathematical Society Lecture Note Series},
  volume={97},
  publisher={Cambridge University Press},
  place={Cambridge},
  date={1987},
  pages={x+149},
}

\bib{briva}{article}{
  author={Bridges, Douglas},
  title={A constructive look at functions of bounded variation},
  journal={Bull. London Math. Soc.},
  volume={32},
  date={2000},
  number={3},
  pages={316--324},
}

\bib{brima}{article}{
  author={Bridges, Douglas},
  author={Mahalanobis, Ayan},
  title={Bounded variation implies regulated: a constructive proof},
  journal={J. Symbolic Logic},
  volume={66},
  date={2001},
  number={4},
  pages={1695--1700},
}

\bib{tognog}{inproceedings}{
  author={Bridges, Douglas},
  editor={Kosheleva, Olga and Shary, Sergey P. and Xiang, Gang and Zapatrin, Roman},
  title={Constructive Continuity of Increasing Functions},
  booktitle={Beyond Traditional Probabilistic Data Processing Techniques: Interval, Fuzzy etc. Methods and Their Applications},
  year={2020},
  publisher={Springer},
  pages={9--19},
}

\bib{brouwke}{article}{
  author={Brouwer, L. E. J.},
  title={Begr\"undung der Mengenlehre unabh\"angig vom logischen Satz vom ausgeschlossenen Dritten. Erster Teil: Allgemeine Mengenlehre},
  journal={Koninklijke Nederlandsche Akademie van Wetenschappen, Verhandelingen, 1ste sectie},
  volume={12},
  number={5},
  date={1918},
  pages={pp.\ 43},
}

\bib{boekskeopendoen}{book}{
  author={Buchholz, Wilfried},
  author={Feferman, Solomon},
  author={Pohlers, Wolfram},
  author={Sieg, Wilfried},
  title={Iterated inductive definitions and subsystems of analysis},
  series={LNM 897},
  publisher={Springer},
  date={1981},
  pages={v+383},
}

\bib{chin}{article}{
  author={Ch\^{e}ng, Fu Hua},
  title={On the rate of convergence of Bernstein polynomials of functions of bounded variation},
  journal={J. Approx. Theory},
  volume={39},
  date={1983},
  number={3},
  pages={259--274},
}

\bib{littlefef}{book}{
  author={Feferman, Solomon},
  title={How a Little Bit goes a Long Way: Predicative Foundations of Analysis},
  year={2013},
  note={unpublished notes from 1977-1981 with updated introduction, \url {https://math.stanford.edu/~feferman/papers/pfa(1).pdf}},
}

\bib{grayk}{article}{
  author={Gray, Robert},
  title={Georg Cantor and transcendental numbers},
  journal={Amer. Math. Monthly},
  volume={101},
  date={1994},
  number={9},
  pages={819--832},
}

\bib{groeneberg}{article}{
  title={Highness properties close to PA-completeness},
  author={Noam Greenberg},
  author={Joseph S. Miller},
  author={Andr\'e Nies},
  year={2019},
  journal={To appear in Israel Journal of Mathematics},
}

\bib{baathetniet}{article}{
  author={Heyting, Arend},
  title={Recent progress in intuitionistic analysis},
  conference={ title={Intuitionism and Proof Theory}, address={Proc. Conf., Buffalo, N.Y.}, date={1968}, },
  book={ publisher={North-Holland, Amsterdam}, },
  date={1970},
  pages={95--100},
}

\bib{hillebilly2}{book}{
  author={Hilbert, David},
  author={Bernays, Paul},
  title={Grundlagen der Mathematik. II},
  series={Zweite Auflage. Die Grundlehren der mathematischen Wissenschaften, Band 50},
  publisher={Springer},
  date={1970},
}

\bib{hrbacekjech}{book}{
  author={Hrbacek, Karel},
  author={Jech, Thomas},
  title={Introduction to set theory},
  series={Monographs and Textbooks in Pure and Applied Mathematics},
  volume={220},
  edition={3},
  publisher={Marcel Dekker, Inc., New York},
  date={1999},
  pages={xii+291},
}

\bib{jordel}{article}{
  author={Jordan, Camillie},
  title={Sur la s\'erie de Fourier},
  journal={Comptes rendus de l'Acad\'emie des Sciences, Paris, Gauthier-Villars},
  volume={92},
  date={1881},
  pages={228--230},
}

\bib{kleeneS1S9}{article}{
  author={Kleene, Stephen C.},
  title={Recursive functionals and quantifiers of finite types. I},
  journal={Trans. Amer. Math. Soc.},
  volume={91},
  date={1959},
  pages={1--52},
}

\bib{kohlenbach4}{article}{
  author={Kohlenbach, Ulrich},
  title={Foundational and mathematical uses of higher types},
  conference={ title={Reflections on the foundations of mathematics}, },
  book={ series={Lect. Notes Log.}, volume={15}, publisher={ASL}, },
  date={2002},
  pages={92--116},
}

\bib{kooltje}{article}{
  author={Kohlenbach, Ulrich},
  title={On uniform weak K\"onig's lemma},
  note={Commemorative Symposium Dedicated to Anne S. Troelstra (Noordwijkerhout, 1999)},
  journal={Ann. Pure Appl. Logic},
  volume={114},
  date={2002},
  number={1-3},
  pages={103--116},
}

\bib{kohlenbach2}{article}{
  author={Kohlenbach, Ulrich},
  title={Higher order reverse mathematics},
  conference={ title={Reverse mathematics 2001}, },
  book={ series={Lect. Notes Log.}, volume={21}, publisher={ASL}, },
  date={2005},
  pages={281--295},
}

\bib{kowalski}{article}{
  author={Kowalewski, Marcin},
  author={Maliszewski, Aleksander},
  title={Separating sets by cliquish functions},
  journal={Topology Appl.},
  volume={191},
  date={2015},
  pages={10--15},
}

\bib{kreupel}{article}{
  author={Kreuzer, Alexander P.},
  title={Bounded variation and the strength of Helly's selection theorem},
  journal={Log. Methods Comput. Sci.},
  volume={10},
  date={2014},
  number={4},
  pages={4:16, 15},
}

\bib{kruisje}{article}{
  author={Kreuzer, Alexander P.},
  title={Measure theory and higher order arithmetic},
  journal={Proc. Amer. Math. Soc.},
  volume={143},
  date={2015},
  number={12},
  pages={5411--5425},
}

\bib{kunen}{book}{
  author={Kunen, Kenneth},
  title={Set theory},
  series={Studies in Logic},
  volume={34},
  publisher={College Publications, London},
  date={2011},
  pages={viii+401},
}

\bib{longmann}{book}{
  author={Longley, John},
  author={Normann, Dag},
  title={Higher-order Computability},
  year={2015},
  publisher={Springer},
  series={Theory and Applications of Computability},
}

\bib{nieuwebron}{article}{
  author={Neubrunn, T.},
  title={Quasi-continuity},
  journal={Real Anal. Exchange},
  volume={14},
  date={1988/89},
  number={2},
  pages={259--306},
}

\bib{nieyo}{article}{
  title={The reverse mathematics of theorems of Jordan and Lebesgue},
  journal={J. Sym. Logic},
  publisher={Cambridge University Press},
  author={Nies, Andr\'e},
  author={Triplett, Marcus},
  author={Yokoyama, Keita},
  year={2021},
  pages={1--18},
}

\bib{dagsam}{article}{
  author={Normann, Dag},
  author={Sanders, Sam},
  title={Nonstandard Analysis, Computability Theory, and their connections},
  journal={Journal of Symbolic Logic},
  volume={84},
  number={4},
  pages={1422--1465},
  date={2019},
}

\bib{dagsamII}{article}{
  author={Normann, Dag},
  author={Sanders, Sam},
  title={The strength of compactness in Computability Theory and Nonstandard Analysis},
  journal={Annals of Pure and Applied Logic},
  volume={170},
  number={11},
  date={2019},
}

\bib{dagsamIII}{article}{
  author={Normann, Dag},
  author={Sanders, Sam},
  title={On the mathematical and foundational significance of the uncountable},
  journal={Journal of Mathematical Logic, \url {https://doi.org/10.1142/S0219061319500016}},
  date={2019},
}

\bib{dagsamVI}{article}{
  author={Normann, Dag},
  author={Sanders, Sam},
  title={Representations in measure theory},
  journal={Submitted, arXiv: \url {https://arxiv.org/abs/1902.02756}},
  date={2019},
}

\bib{dagsamVII}{article}{
  author={Normann, Dag},
  author={Sanders, Sam},
  title={Open sets in Reverse Mathematics and Computability Theory},
  journal={Journal of Logic and Computation},
  volume={30},
  number={8},
  date={2020},
  pages={pp.\ 40},
}

\bib{dagsamV}{article}{
  author={Normann, Dag},
  author={Sanders, Sam},
  title={Pincherle's theorem in reverse mathematics and computability theory},
  journal={Ann. Pure Appl. Logic},
  volume={171},
  date={2020},
  number={5},
  pages={102788, 41},
}

\bib{dagsamX}{article}{
  author={Normann, Dag},
  author={Sanders, Sam},
  title={On the uncountability of $\mathbb {R}$},
  journal={Submitted, arxiv: \url {https://arxiv.org/abs/2007.07560}},
  pages={pp.\ 37},
  date={2020},
}

\bib{dagsamIX}{article}{
  author={Normann, Dag},
  author={Sanders, Sam},
  title={The Axiom of Choice in Computability Theory and Reverse Mathematics},
  journal={Journal of Logic and Computation},
  volume={31},
  date={2021},
  number={1},
  pages={297-325},
}

\bib{dagsamXI}{article}{
  author={Normann, Dag},
  author={Sanders, Sam},
  title={On robust theorems due to Bolzano, Weierstrass, and Cantor in Reverse Mathematics},
  journal={See \url {https://arxiv.org/abs/2102.04787}},
  pages={pp.\ 30},
  date={2021},
}

\bib{varijo}{article}{
  author={Richman, Fred},
  title={Omniscience principles and functions of bounded variation},
  journal={Mathematical Logic Quarterly},
  volume={48},
  date={2002},
  pages={111--116},
}

\bib{samcie19}{article}{
  author={Sanders, Sam},
  title={Nets and Reverse Mathematics: initial results},
  year={2019},
  journal={LNCS 11558, Proceedings of CiE19, Springer},
  pages={253-264},
}

\bib{samwollic19}{article}{
  author={Sanders, Sam},
  title={Reverse Mathematics and computability theory of domain theory},
  year={2019},
  journal={LNCS 11541, Proceedings of WoLLIC19, Springer},
  pages={550-568},
}

\bib{samnetspilot}{article}{
  author={Sanders, Sam},
  title={Nets and Reverse Mathematics: a pilot study},
  year={2021},
  journal={Computability},
  pages={31-62},
  volume={10},
  number={1},
}

\bib{simpson2}{book}{
  author={Simpson, Stephen G.},
  title={Subsystems of second order arithmetic},
  series={Perspectives in Logic},
  edition={2},
  publisher={CUP},
  date={2009},
  pages={xvi+444},
}

\bib{stillebron}{book}{
  author={Stillwell, J.},
  title={Reverse mathematics, proofs from the inside out},
  pages={xiii + 182},
  year={2018},
  publisher={Princeton Univ.\ Press},
}

\bib{tur37}{article}{
  author={Turing, Alan},
  title={On computable numbers, with an application to the Entscheidungs-problem},
  year={1936},
  journal={Proceedings of the London Mathematical Society},
  volume={42},
  pages={230-265},
}

\bib{veldje2}{article}{
  author={Veldman, Wim},
  title={Understanding and using Brouwer's continuity principle},
  conference={ title={Reuniting the antipodes}, address={Venice}, date={1999}, },
  book={ series={Synthese Lib.}, volume={306}, publisher={Kluwer}, },
  date={2001},
  pages={285--302},
}

\bib{wierook}{book}{
  author={Weihrauch, Klaus},
  title={Computable analysis},
  publisher={Springer-Verlag, Berlin},
  date={2000},
  pages={x+285},
}

\bib{verzengend}{article}{
  author={Zheng, Xizhong},
  author={Rettinger, Robert},
  title={Effective Jordan decomposition},
  journal={Theory Comput. Syst.},
  volume={38},
  date={2005},
  number={2},
  pages={189--209},
}

\end{biblist}
\end{bibdiv}

\bye